\newtheorem{theorem}{Theorem}[section]
\newtheorem{defn}[theorem]{Definition}
\newtheorem{lemma}[theorem]{Lemma}
\newtheorem{rem}[theorem]{Remark}
\def\trace{\mathop{\rm trace}}
\def\det{\mathop{\rm det}}
\def\IR{\mathbb{R}}
\def\IR{{\mathbb{R}}}
\def\Span{\mathop{\rm Span}}
\theoremstyle{remark}
\numberwithin{equation}{section}
\begin{document}

\title[FRNK methods]{Functionally fitted\\ Runge-Kutta-Nystr\"{o}m methods}

\author{N. S. Hoang}

\address{Department of Mathematics, University of West Georgia,
Carrollton, GA 30116, USA}

\email{nhoang@westga.edu}

\author{R. B. Sidje}

\address{Department of Mathematics, University of Alabama,
Tuscaloosa, AL 35487, USA}

\email{roger.b.sidje@ua.edu}

\subjclass[2000]{Primary  65R; Secondary }

\date{}

\keywords{second-order IVP, Runge-Kutta-Nystr\"{o}m, functionally fitted, generalized collocation.}

\begin{abstract}
We have shown previously that functionally fitted Runge-Kutta (FRK) methods 
can be studied using a convenient collocation framework.
Here, we extend that framework to
functionally fitted Runge-Kutta-Nystr\"om (FRKN) methods, shedding further light on the fact
that these methods can integrate a second-order differential equation exactly if its solution
is a combination of certain basis functions, and that superconvergence can be obtained 
when the collocation points satisfy some orthogonality conditions.
An analysis of their stability is also conducted.
\end{abstract}

\maketitle

\section{Introduction}

Consider the special second-order differential equation
\begin{equation}
\label{sys}
\begin{split}
y&''(t)=f(t,y(t)),\quad y(t_0)=y_0,\quad y'(t_0)=y'_0,\\
y&:\IR\rightarrow \IR,\quad f:\IR\times\IR\rightarrow \IR,\quad t\in [t_0,\, t_{0}+T],
\end{split}
\end{equation}
where, for simplicity of notation, we %will set $t_0=0$ and 
keep \eqref{sys} in scalar form although our results apply
to a system of ODEs as well. 
We assume that the problem satisfies the necessary conditions to have a unique solution.
Letting $z(t) := y'(t)$, it is well known that
this problem can be converted to a first-order IVP
\begin{equation}
\label{indirectsys}
\begin{pmatrix}
y(t) \\z(t) 
\end{pmatrix}'
=
\begin{pmatrix}
z(t)\\f(t,y(t)) 
\end{pmatrix},\quad
\begin{pmatrix}
y(t_0) \\z(t_0) 
\end{pmatrix}
=
\begin{pmatrix}
y_0\\y_0'
\end{pmatrix},
\end{equation}
so that it can be studied entirely through this commonly called indirect approach.
What is however preferable 
is to address~\eqref{sys} directly without doubling the dimension of the problem
and/or introducing the term $y'(t)$ that is
not present in the initial problem. This is the popular approach taken 
by direct Runge-Kutta-Nystr\"{o}m (RKN) methods.

In~\cite{FRK,SFRK}, we developed a collocation framework
for functionally fitted Runge-Kutta (FRK) methods that can be applied to the
indirect approach~\eqref{indirectsys}. 
We briefly alluded in~\cite[Remark~4.2]{SFRK} to a P-stability result
 adapted from this first-order formulation.
But the prominence of direct RKN methods
motivates recasting our framework to this popular and well-regarded setting. 
Consequently in this paper
we target the special second-order IVP \eqref{sys}
as RKN methods do,
 and we develop functionally fitted Runge-Kutta-Nystr\"{o}m (FRKN) methods in a way that mirrors
what we did for FRK methods.
Although there are similarities with our previous work, this study fills a gap in the literature 
by providing a unifying umbrella for both FRK and FRKN methods.
In a sense, similarities with~\cite{FRK,SFRK} validate our unified framework,
but there are nevertheless differences in the details to make the most of the special form~\eqref{sys}. 
The overlap is kept to a minimum by not repeating
proofs that do not add a distinctive value to our presentation and that can be drawn from our
earlier work, with hints given to the interested reader
on how to recover them.

Among the many studies that have already been devoted to~\eqref{sys} other than
general purpose methods, we can cite exponentially fitted methods \cite{D?Ambrosio,Franco2,Franco&Gomez2013,Pater},
and especially the work of Ozawa~\cite{O2} who studied functionally fitted methods
using Taylor series. Our paper builds on that work but uses collocation techniques.
The organization of the paper is as follows. 
Section~\ref{sec:FRKN} formally defines 
functionally fitted Runge-Kutta-Nystr\"{o}m methods for general basis functions.
Section~\ref{sec:Separable} summarizes properties that arise from basis functions that are separable.
Section~\ref{sec:Accuracy} derives their order of accuracy using our collocation framework.
Section~\ref{sec:Stability} presents new results regarding their stability.
Section~\ref{sec:Examples} is a walk-through example.
Section~\ref{sec:Conclusion} provides some concluding remark.

\section{Functionally fitted RKN methods}
\label{sec:FRKN}

Recall that a conventional $s$-stage RKN method is often defined by its Butcher-tableau as
\begin{displaymath}
\renewcommand{\arraystretch}{1.2}
\begin{array}{c|c}
\bm{c}  &\bm{A} \\ \hline
        &\bm{b}^T\\
	&\bm{d}^T
\end{array},~~
\bm{A} = [a_{ij}] \in \IR^{s\times s},~~ 
\bm{b} = (b_1, \ldots, b_s)^T,~~ 
\bm{d} = (d_1, \ldots, d_s)^T. 
\end{displaymath}
For an explicit RKN method, the matrix $\bm{A}$ is strictly lower triangular and
$c_1=0$. 
Let $\bm{e}=(1,\ldots,1)^T$ of length $s$,
denote ${y}_n$ and ${y}'_n$  the approximations to ${y}(t)$ and ${y}'(t)$ at $t_n$,
then the next iterates
${y}_{n+1}$ and ${y}'_{n+1}$ are computed using
\begin{align}
\label{eq2.1}
y_{n+1} &= y_n + hy'_n+ h^2\bm{b}^Tf(\bm{e}t_n+\bm{c}h,\bm{Y}_n) \in \IR,\\
\label{eq2.2}
y'_{n+1} &= y'_n + h\bm{d}^Tf(\bm{e}t_n+\bm{c}h,\bm{Y}_n) \in \IR,\\
\bm{Y} _n&= \bm{e}y_n+ \bm{c}hy'_n+h^2\bm{A}f(\bm{e}t_n+\bm{c}h,\bm{Y}_n) \in \IR^s,
\end{align}
where 
$\bm{Y}_n=(Y_{n,1},\ldots,Y_{n,s})^T$ is the vector of intermediate stage values and
$f(\bm{e}t_n+\bm{c}h,\bm{Y}_n) = 
(f(t_n+c_1h,Y_{n,1}), \ldots, f(t_n+c_sh,Y_{n,s}))^T$.

%, $y_n\approx y(t_n)$, and $y'_n\approx y'(t_n)$.

%\subsection{A more general condition for functionally-fitted methods}

Now, assume we are given 
distinct parameters $(c_i)_{i=1}^s$, usually in $[0,1]$, and a
 set of linearly independent basis functions $\{u_k(t)\}_{k=1}^{s}$ that
should not include $\{1,t\}$.
We will augment the set with~$\{1,t\}$ to characterize FRKN methods later
because any problem for which the solution is a linear combination of~$\{1,t\}$ 
is always integrated exactly by FRKN (and RKN) methods, regardless of basis functions.
In general, basis functions are typically chosen
to exploit any information on the solution that general purpose methods do not.
The linear independence is to avoid undue redundancy in the set by not including
a function that is merely a linear combination of the others.
Functionally fitted (or generalized collocation) RKN
methods are defined to solve \eqref{sys} exactly
if its solution is a linear combination of the chosen functions (Ozawa~\cite{O2}).

\begin{defn}[Functionally fitted RKN method]
\label{ffRKdef}
An $s$-stage RKN method is an FRKN 
(or a generalized collocation RKN) method with respect 
to the basis functions $\{u_k(t)\}_{k=1}^{s}$ if the 
following relations are satisfied for all $k=1,\ldots,s$:
\begin{equation}
\label{fFRKN}
\begin{split}
u_k(t+h)&=u_k(t)+hu_k'(t)+h^2\bm{b}(t,h)^Tu_k''(\bm{e}t+\bm{c}h),\\
u'_k(t+h)&=u'_k(t)+h\bm{d}(t,h)^Tu_k''(\bm{e}t+\bm{c}h),\\
u_k(\bm{e}t+\bm{c}h)&=\bm{e}u_k(t)+h\bm{c}u'_k(t)+h^2\bm{A}(t,h)u_k''(\bm{e}t+\bm{c}h).
\end{split}
\end{equation}
\end{defn}

The underlying coefficients $\bm{A}(t,h)$, $\bm{b}(t,h)$ and $\bm{d}(t,h)$ are  
often determined by solving the linear system of algebraic equations
arising from \eqref{fFRKN}.
When the basis functions are the monomials, classical algebraic collocation
RKN methods \cite{cong91} are recovered.
 It is clear that a solution to \eqref{fFRKN} may or may not exist depending
 on the choice of $\{u_k(t)\}_{k=1}^{s}$. 
To guarantee the existence of a solution we require that the basis functions 
satisfy the {\it collocation condition} defined below. 

\begin{defn}[Collocation condition for FRKN methods]
\label{defn2.2}
A set of sufficiently smooth functions $\{u_1(t),\ldots,$ $u_s(t)\}$ is said 
to satisfy the collocation condition  for FRKN methods if the following matrices
\begin{align*}
\bm{E}(t,h)&=\bigg{(}u_{1}(\bm{e}t+\bm{c}h)-u_1(\bm{e}t)-\bm{c}hu'_1(t)
,\ldots,u_{s}(\bm{e}t+\bm{c}h)-u_s(\bm{e}t)-\bm{c}hu'_s(t)\bigg{)},\\
\bm{F}(t,h)&=\bigg{(}u_{1}''(\bm{e}t+\bm{c}h),u_{2}''(\bm{e}t+\bm{c}h),\ldots,u_{s}''(\bm{e}t+\bm{c}h)\bigg{)},
\end{align*}
are both nonsingular almost everywhere with respect to $h$ on $[0,T]$ for any given $t$.
\end{defn}
%
%\begin{rem}\rm
%We shall see later (cf.~proof of Lemma~\ref{newinter}) that an 
%equivalent, but more readable formulation of the condition
%for $\bm{E}(t,h)$ can be made in terms of the matrix
%$$
%\begin{pmatrix}
%0&u'_0(t)&u'_1(t)&u'_2(t)&\cdots&u'_2(t)\\
%1&u_0(t)&u_1(t)&u_2(t)&\cdots&u_2(t)\\
%\bm{e}&u_0(\bm{e}t+\bm{c}h) &u_{1}(\bm{e}t+\bm{c}h)
%&u_{2}(\bm{e}t+\bm{c}h)
%&\cdots
%&u_{s}(\bm{e}t+\bm{c}h)
%\end{pmatrix}
%,\quad u_0(t)=t.
%$$
%\end{rem}

\begin{rem}\rm
Note that the last equation in \eqref{fFRKN} can be rewritten as 
$$
\bm{E}(t,h)=h^2\bm{A}(t,h)\bm{F}(t,h).
$$
Hence the existence of the matrix $\bm{A}(t,h)$  is guaranteed if $\bm{F}(t,h)$ is nonsingular,
as required in Definition~\ref{defn2.2}. We shall see later that by also requiring $\bm{E}(t, h)$ to be nonsingular we ensure 
the existence of the so-called {\em collocation solution}.
Requiring both $\bm{E}(t, h)$ and $\bm{F}(t, h)$  nonsingular
makes $\bm{A}(t,h)$  nonsingular as well, leading to FRKN methods that are
implicit. By allowing $\bm{A}(t,h)$ to be strictly lower triangulary (thus singular), 
Franco and G{\`o}mez~\cite{Franco&Gomez2013} obtained
 explicit  exponentially fitted RKN methods.
\end{rem}

\begin{rem}\rm
In \cite{O2}, Ozawa used Taylor series expansions to prove that 
when given any set of functions $\{\varphi_k(t)\}_{k=1}^s$,
if the Wronskian $W(\varphi_1,\varphi_2,\ldots,\varphi_s)(t) \ne 0$, then the matrix $\bm{\Phi}(t,h)
=\big{(}\varphi_{1}(\bm{e}t+\bm{c}h),\varphi_{2}(\bm{e}t+\bm{c}h),\ldots,\varphi_{s}(\bm{e}t+\bm{c}h)\big{)}$
is nonsingular for small $h>0$.
We can therefore conclude that 
$\bm{F}(t,h)$ is nonsingular for $h$ sufficiently small if
$W(u_1'',u_2'',\ldots,u_s'')(t) \ne 0,\,
\forall t\in[t_0,\,t_0+T]$. Moreover, using the same Taylor series approach, 
we can see that
$$
\bm{E}(t,h)=
\bigg{(}\frac{(\bm{c}h)^2}{2!},\frac{(\bm{c}h)^3}{3!},\ldots,\frac{(\bm{c}h)^{s+1}}{(s+1)!}\bigg{)}
W(u_1'',u_2'',\ldots,u_s'')(t)+O(h^{s+2}),
$$
so that $W(u_1'',u_2'',\ldots,u_s'')(t) \ne 0,
\forall t\in[t_0,\,t_0+T]$ is a sufficient (but not necessary)
condition for both $\det \bm{E}(t,h)\ne 0$ and $\det \bm{F}(t,h)\ne 0$ when $h$ is sufficiently small. 
Thus, our condition is less restrictive than the one in \cite{O2}.
Indeed it is easily verified that the set 
$\{\sin(\omega t),\sin(2\omega t)\}$ 
does not satisfy Ozawa's condition but satisfy 
our collocation condition.
\end{rem}

The following is a direct result of the collocation condition in Definition~\ref{defn2.2}. 

\begin{theorem}
\label{theorem2.3}
The coefficients of a FRKN method based on a set of functions 
that satisfy the collocation condition are uniquely determined 
almost everywhere with respect to $h$ on the interval of integration.
\end{theorem}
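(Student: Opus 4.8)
The plan is to read each of the three families of identities in \eqref{fFRKN} as a linear system in the unknown coefficients and to show that, for fixed $t$ and $h$, the common coefficient matrix is precisely $\bm{F}(t,h)$, whose nonsingularity is guaranteed almost everywhere by the collocation condition. Since $u_k(\bm{e}t)=\bm{e}u_k(t)$, the quantity appearing on the left of the third relation is exactly the $k$-th column of $\bm{E}(t,h)$, while $u_k''(\bm{e}t+\bm{c}h)$ is the $k$-th column of $\bm{F}(t,h)$; this is the observation already recorded in the remark preceding the theorem.

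First I would collect the third relation over $k=1,\ldots,s$ by stacking the columns, which yields the matrix identity $\bm{E}(t,h)=h^2\bm{A}(t,h)\bm{F}(t,h)$. Next I would treat the first and second relations the same way. For each $k$ the first relation is the scalar equation $u_k(t+h)-u_k(t)-hu_k'(t)=h^2\bm{b}(t,h)^Tu_k''(\bm{e}t+\bm{c}h)$; collecting these $s$ scalars into a row vector $\bm{g}(t,h)$ whose $k$-th entry is $u_k(t+h)-u_k(t)-hu_k'(t)$ gives $\bm{g}(t,h)=h^2\bm{b}(t,h)^T\bm{F}(t,h)$. Likewise, writing $\bm{p}(t,h)$ for the row vector with $k$-th entry $u_k'(t+h)-u_k'(t)$, the second relation collects into $\bm{p}(t,h)=h\,\bm{d}(t,h)^T\bm{F}(t,h)$.

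The final step is to invoke the collocation condition. It guarantees that $\bm{F}(t,h)$ is nonsingular almost everywhere with respect to $h$ on $[0,T]$ for any given $t$, so on that full-measure set we may invert it to obtain $\bm{A}(t,h)=h^{-2}\bm{E}(t,h)\bm{F}(t,h)^{-1}$, $\bm{b}(t,h)^T=h^{-2}\bm{g}(t,h)\bm{F}(t,h)^{-1}$ and $\bm{d}(t,h)^T=h^{-1}\bm{p}(t,h)\bm{F}(t,h)^{-1}$, each of which is then uniquely determined. This establishes the claim. The point worth emphasizing---and what I would flag as the only real subtlety---is that it is the single matrix $\bm{F}(t,h)$ that governs all three linear systems, so nonsingularity of $\bm{F}(t,h)$ alone already forces uniqueness; the extra requirement in Definition~\ref{defn2.2} that $\bm{E}(t,h)$ be nonsingular plays no role here and is instead reserved for the later existence of the collocation solution. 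The qualifier ``almost everywhere'' is inherited directly from the collocation condition, since outside a set of measure zero in $h$ the inverse $\bm{F}(t,h)^{-1}$ exists and the formulas above are well defined.
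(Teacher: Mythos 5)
Your proof is correct and follows exactly the route the paper intends: the paper leaves Theorem~\ref{theorem2.3} as a ``direct result'' of Definition~\ref{defn2.2}, with the preceding remark already recording the key identity $\bm{E}(t,h)=h^2\bm{A}(t,h)\bm{F}(t,h)$, and your stacking of the three families of relations into linear systems with the common coefficient matrix $\bm{F}(t,h)$ is precisely that argument carried out in full. Your closing observation that nonsingularity of $\bm{F}(t,h)$ alone yields uniqueness of the coefficients, while nonsingularity of $\bm{E}(t,h)$ is reserved for the existence of the collocation solution, is also consistent with the paper's remark.
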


As we did with FRK methods in~\cite{FRK,SFRK}, 
our overarching contribution is to
study the order of accuracy of FRKN methods
without using order conditions. We do so by establishing the existence of a fundamental function 
that we call the {\em collocation solution}. We also use this function later to
analyze the stability.

Choose $(c_i)_{i=1}^s$ and consider a FRKN method
$(\bm{c}$, $\bm{A}(t,h)$, $\bm{b}(t,h)$, $\bm{d}(t,h))$ based on a set of given basis functions
$\{u_k\}_{k=1}^s$. Let 
\begin{align*}
\bm{H}:&=\text{Span}\{1,t,u_1(t),\ldots,u_s(t)\}\\
&=\bigg\{v(t)\bigg| 
v(t)= \alpha_{0}+a_0t+\sum_{i=1}^sa_iu_i(t);\, \alpha_0,a_0,\ldots,a_s\in \mathbb{R}\bigg\}.
\end{align*}
We call $u(t)$ the {\em collocation solution}
if it is an element of $\bm{H}$ that satisfies equation \eqref{sys} at the collocation points $(hc_i)_{i=1}^s$, that is,
\begin{equation}
\label{evenpoly}
u(t_0) = y_0,\quad u'(t_0) = y'_0,\quad
u''(t_0+c_ih) = f(t_0+c_ih,u(t_0+c_ih)),
\end{equation}
for $i = 1, \ldots, s$.
As it is well known, the {\em collocation method} consists in taking the numerical solution after one step 
as 
\begin{equation}
\label{colmethod}
y_1 = u(t_0 + h), \quad y_1' = u'(t_0+h). 
\end{equation}
Since $u(t)$ is only defined implicitly, we must first guarantee its existence in our context as well.
 
\begin{lemma}\label{newinter}
Suppose that we are given $s+2$ values $y_0, y'_0,y_1, \ldots, y_s$  
and the pair $(t_0,h)$ is such that $\bm{E}(t_0,h)$ is nonsingular, 
then there exists an interpolation function $\varphi \in\bm{H}$ such 
that $\varphi(t_0) = y_0$, $\varphi'(t_0) = y'_0$, and $\varphi(t_0+c_ih)=y_i$, $i=1,\ldots,s$.
\end{lemma}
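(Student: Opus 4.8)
The plan is to reduce the existence of $\varphi$ to the nonsingularity of $\bm{E}(t_0,h)$ by expressing the interpolation conditions as a square linear system in the coefficients of $\varphi$. Using the parametrization of $\bm{H}$ given just above the lemma, I would write
\begin{equation*}
\varphi(t) = \alpha_0 + a_0 t + \sum_{i=1}^s a_i u_i(t),
\end{equation*}
with unknown scalars $\alpha_0, a_0, a_1, \ldots, a_s$. The $s+2$ requirements $\varphi(t_0)=y_0$, $\varphi'(t_0)=y_0'$, and $\varphi(t_0+c_jh)=y_j$ for $j=1,\ldots,s$ then form a linear system of $s+2$ equations in these $s+2$ unknowns, so it suffices to exhibit a solution.

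First I would use the two conditions imposed at $t_0$ to eliminate $\alpha_0$ and $a_0$. The derivative condition yields $a_0 = y_0' - \sum_{i=1}^s a_i u_i'(t_0)$, and the value condition yields $\alpha_0 + a_0 t_0 = y_0 - \sum_{i=1}^s a_i u_i(t_0)$. Substituting both expressions into $\varphi(t_0+c_jh)=y_j$ and splitting $\alpha_0 + a_0(t_0+c_jh)$ as $(\alpha_0+a_0 t_0) + a_0 c_j h$, the known terms collect into $y_0 + c_j h\, y_0'$ and, after cancellation, the $j$-th collocation condition reduces to
\begin{equation*}
\sum_{i=1}^s a_i \big[u_i(t_0+c_jh) - u_i(t_0) - c_j h\, u_i'(t_0)\big] = y_j - y_0 - c_j h\, y_0',\qquad j=1,\ldots,s.
\end{equation*}

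The key observation is that the coefficient of $a_i$ in the $j$-th equation is exactly the $(j,i)$ entry of the matrix $\bm{E}(t_0,h)$ from Definition~\ref{defn2.2}. Setting $\bm{a} = (a_1,\ldots,a_s)^T$ and $\bm{r} = (y_j - y_0 - c_j h\, y_0')_{j=1}^s$, the reduced system is precisely $\bm{E}(t_0,h)\,\bm{a} = \bm{r}$. Since $\bm{E}(t_0,h)$ is nonsingular by hypothesis, there is a unique $\bm{a}$, and then $a_0$ and $\alpha_0$ are recovered in turn from the derivative and value conditions. This determines $\varphi$ and establishes the claimed existence (indeed uniqueness within this parametrization).

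The computation is elementary, so I do not anticipate a genuine obstacle; the only point requiring care is the bookkeeping in the elimination step, where one must check that the substitutions collapse \emph{exactly} into the columns of $\bm{E}(t_0,h)$ rather than into some perturbed matrix. This mirrors the interpolation lemma underpinning the FRK collocation framework of~\cite{FRK,SFRK}, now adapted to accommodate the extra derivative datum $y_0'$ that the second-order problem~\eqref{sys} demands.
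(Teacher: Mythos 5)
Your proof is correct and is essentially the same as the paper's: both parametrize $\varphi\in\bm{H}$ by its coefficients and reduce the $s+2$ interpolation conditions to the nonsingularity of $\bm{E}(t_0,h)$. The paper performs the reduction as row operations on the determinant of the full $(s+2)\times(s+2)$ system, while you eliminate $\alpha_0$ and $a_0$ first to reach the $s\times s$ system $\bm{E}(t_0,h)\,\bm{a}=\bm{r}$ directly; these are the same algebraic manipulations viewed from two angles, and your bookkeeping checks out.
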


\begin{proof}Any function $\varphi \in \bm{H}$ can 
be represented in the form
$$
\varphi(t)= \alpha_0+a_0t+\sum_{i=1}^{s}a_iu_i(t).
$$
Letting $t_i:=t_0+c_ih$, $i=1,\ldots,s$,
the interpolation criteria can be stated therefore as
\begin{equation}
\label{eqm3.0}
\begin{pmatrix}
1&	t_0& u_{1}(t_0)  & u_{2}(t_0)  & \cdots & u_{s}(t_0)\\
0&	1& u'_{1}(t_0)  & u'_{2}(t_0)  & \cdots & u'_{s}(t_0)\\
1&	t_1& u_{1}(t_1)  & u_{2}(t_1)  & \cdots & u_{s}(t_1)\\
\vdots &\vdots    & \vdots    & \ddots & \vdots\\
1&	t_s& u_{1}(t_s)  & u_{2}(t_s)  & \cdots & u_{s}(t_s)
\end{pmatrix}
\begin{pmatrix}
\alpha_0\\
a_0\\
a_1\\
\vdots\\
a_s
\end{pmatrix}
=
\begin{pmatrix}
y_0\\
y'_0\\
y_1\\
\vdots\\
y_s
\end{pmatrix}.
\end{equation}
For this equation to have a unique solution the matrix in 
the left-hand side must be nonsingular. Subtracting its first
row from the third to the last row, we see that its determinant is
\begin{equation}
\label{eqm3.1}
\begin{vmatrix}
1   &u'_{1}(t_0)            &u'_{2}(t_0)            & \cdots &u'_{s}(t_0)\\
c_1h&u_{1}(t_1)-u_{1}(t_0)  &u_{2}(t_1)-u_{2}(t_0)  & \cdots &u_{s}(t_1)-u_{s}(t_0)\\
c_2h&u_{1}(t_2)-u_{1}(t_0)  &u_{2}(t_2)-u_{2}(t_0)  & \cdots &u_{s}(t_2)-u_{s}(t_0)\\
\vdots &\vdots    & \vdots    & \ddots & \vdots\\
c_sh&u_{1}(t_s)-u_{1}(t_0)  &u_{2}(t_s)-u_{2}(t_0)  & \cdots &u_{s}(t_s)-u_{s}(t_0)
\end{vmatrix}.
\end{equation}
Multiplying the first row in the determinant in \eqref{eqm3.1} by 
$c_ih$ and subtracting it from the $(i+1)$-th row for $i=1,\ldots,s$, 
one concludes that this determinant is
\begin{equation*}
\begin{vmatrix}
u_{1}(t_1)-u_{1}(t_0)- c_1hu'_{1}(t_0)  & \cdots &u_{s}(t_1)-u_{s}(t_0) - c_1hu'_{s}(t_0)\\
u_{1}(t_2)-u_{1}(t_0)- c_2hu'_{1}(t_0)  & \cdots &u_{s}(t_2)-u_{s}(t_0) - c_2hu'_{s}(t_0)\\
\vdots     & \ddots & \vdots\\
u_{1}(t_s)-u_{1}(t_0)- c_shu'_{1}(t_0)  & \cdots &u_{s}(t_s)-u_{s}(t_0) - c_shu'_{s}(t_0)
\end{vmatrix}
= \det\bm{E}(t_0,h).
\end{equation*}
Therefore, the solution to \eqref{eqm3.0} is uniquely determined since
$\bm{E}(t_0,h)$ is assumed nonsingular under the collocation condition.
\end{proof}

\begin{theorem}
\label{theorem3.10}
The collocation method defined by \eqref{evenpoly} and \eqref{colmethod} is equivalent to the $s$-stage
FRKN method with coefficients
$(\bm{c}$, $\bm{A}(t,h)$, $\bm{b}(t,h)$, $\bm{d}(t,h))$.
\end{theorem}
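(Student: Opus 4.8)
The plan is to show that the collocation solution $u$ generates stage values and update formulas that coincide exactly with those of the FRKN method, so that after one step both schemes return the same pair $(y_1, y_1')$. I would work with a generic element $u(t) = \alpha_0 + a_0 t + \sum_{i=1}^s a_i u_i(t)$ of $\bm{H}$ and track how the defining conditions constrain the coefficient vector $\bm{a} = (a_1,\ldots,a_s)^T$ together with $\alpha_0$ and $a_0$. The whole argument is then a chain of reversible linear-algebra identities hinging on the nonsingularity of $\bm{E}(t_0,h)$ and $\bm{F}(t_0,h)$.

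First I would impose the two initial conditions $u(t_0)=y_0$ and $u'(t_0)=y'_0$ from \eqref{evenpoly} to solve for $\alpha_0 + a_0 t_0$ and $a_0$ in terms of $\bm{a}$. Substituting these into $u(t_0+c_jh)$ and simplifying, the constant and first-derivative terms $u_i(t_0)$ and $u'_i(t_0)$ regroup so that the stage vector $\bm{Y} := u(\bm{e}t_0+\bm{c}h)$ acquires the form $\bm{Y} = \bm{e}y_0 + \bm{c}h y'_0 + \bm{E}(t_0,h)\bm{a}$, with $\bm{E}$ precisely the matrix of Definition~\ref{defn2.2}. Next, since $u'' = \sum_i a_i u_i''$ lies in the span of the $u_i''$, the second-derivative collocation conditions $u''(t_0+c_ih)=f(t_0+c_ih,u(t_0+c_ih))$ read, in matrix form, $\bm{F}(t_0,h)\bm{a} = f(\bm{e}t_0+\bm{c}h,\bm{Y})$. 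Solving for $\bm{a}$ and invoking the identity $\bm{E}(t_0,h)=h^2\bm{A}(t_0,h)\bm{F}(t_0,h)$ from the earlier Remark, I would eliminate $\bm{a}$ to obtain $\bm{Y} = \bm{e}y_0 + \bm{c}h y'_0 + h^2\bm{A}(t_0,h) f(\bm{e}t_0+\bm{c}h,\bm{Y})$, which is exactly the FRKN stage equation.

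It then remains to match the two update formulas. Computing $u(t_0+h)$ and $u'(t_0+h)$ in the same fashion and substituting the first and second relations of \eqref{fFRKN}, which express $u_i(t_0+h)-u_i(t_0)-h u'_i(t_0)$ and $u'_i(t_0+h)-u'_i(t_0)$ through $\bm{b}(t_0,h)$, $\bm{d}(t_0,h)$ and the columns of $\bm{F}(t_0,h)$, I would get $u(t_0+h) = y_0 + h y'_0 + h^2\bm{b}(t_0,h)^T\bm{F}(t_0,h)\bm{a}$ and $u'(t_0+h) = y'_0 + h\bm{d}(t_0,h)^T\bm{F}(t_0,h)\bm{a}$. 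Using once more $\bm{F}(t_0,h)\bm{a} = f(\bm{e}t_0+\bm{c}h,\bm{Y})$ turns these into the FRKN updates \eqref{eq2.1}--\eqref{eq2.2}, completing the identification.

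For the logical structure and for existence I would invoke Lemma~\ref{newinter}: given any solution $\bm{Y}$ of the (nonlinear, implicit) FRKN stage system, the lemma produces the unique $u\in\bm{H}$ interpolating $y_0$, $y'_0$ and the $Y_i$, and the computation above certifies that this $u$ also meets the second-derivative collocation conditions, hence is a collocation solution reproducing the FRKN step; conversely, any collocation solution yields, via $\bm{Y}=u(\bm{e}t_0+\bm{c}h)$, a solution of the stage system. The correspondence is therefore a genuine bijection between the two solution sets. The main delicacy I anticipate is exactly this bookkeeping around the implicit, nonlinear character of $f$: one must keep $\bm{a}$ driven entirely by the stage values through $\bm{F}(t_0,h)\bm{a}=f(\bm{e}t_0+\bm{c}h,\bm{Y})$, and it is the nonsingularity of both $\bm{E}(t_0,h)$ and $\bm{F}(t_0,h)$, guaranteed almost everywhere by the collocation condition, that makes every elimination step reversible and thus upgrades a one-way implication to full equivalence.
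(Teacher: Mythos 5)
Your proposal is correct and follows essentially the same route as the paper: both hinge on Lemma~\ref{newinter} to produce the interpolant in $\bm{H}$, on the linearity of the defining relations \eqref{fFRKN} over $\bm{H}$ (equivalently the factorization $\bm{E}(t_0,h)=h^2\bm{A}(t_0,h)\bm{F}(t_0,h)$), and on the nonsingularity of $\bm{A}(t_0,h)$ to pass between the stage equations and the collocation conditions. You merely carry out the argument in explicit coordinate form and spell out the update formulas and the reverse direction, which the paper leaves implicit.
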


\begin{proof}
Consider the equations we have to solve in an $s$-stage FRKN method
\begin{equation}
\label{eqm4.1}
Y_i = y_0 +hc_iy'_0+ h^2 \sum_{j=1}^{s}a_{ij}f(t_0+c_jh,Y_j),\quad i=1,\ldots,s.
\end{equation} 
Let $(\overline{Y}_1,\ldots,\overline{Y}_s)^T$ be the unique solution to the system of equations \eqref{eqm4.1}.
(Using fixed-point iterations one can show that this solution does indeed exist for a sufficiently small $h>0$.) 
Lemma~\ref{newinter} ensures that there is an interpolation function $\varphi(t)\in \bm{H}$ such that
$\varphi'(t_0)=y'_0$,  
$\varphi(t_0)=y_0$, and $\varphi(t_0+c_ih)=\overline{Y_i}$, $i=1,\ldots,s$. 
Hence $\varphi(t)$ satisfies
\begin{equation}
\label{cuoi10}
\varphi(\bm{e}t_0+\bm{c}h)=\bm{e}\varphi(t_0) +\bm{c}h\varphi'(t_0) + h^2 \bm{A}(t_0,h)f(\bm{e}t_0+\bm{c}h,\varphi(\bm{e}t_0+\bm{c}h)).
\end{equation}
Since $\varphi(t)\in\bm{H}=\Span\{1,t,u_1(t),\ldots,u_s(t)\}$, we can write
\begin{align*}
\varphi(t)=\alpha_0 + a_0t + \sum_{k=1}^sa_ku_k(t),\quad
\varphi'(t)=a_0 + \sum_{k=1}^sa_ku_k'(t),\quad
\varphi''(t)=\sum_{k=1}^sa_ku_k''(t),
\end{align*}
and since the definition of a FRKN method \eqref{fFRKN} means that
%the coefficients $(\bm{c},\bm{A}(t,h),\bm{b}(t,h),\bm{d}(t,h))$ satisfy
\begin{align*}
u_k(\bm{e}t_0+\bm{c}h)=\bm{e}u_k(t_0)+\bm{c}hu'_k(t_0)+h^2\bm{A}(t_0,h)u_k''(\bm{e}t_0+\bm{c}h),\quad k=1,\ldots,s,
\end{align*}
we can obtain
\begin{align}
\label{cuoi2}
\varphi(\bm{e}t_0+\bm{c}h)=\bm{e}\varphi(t_0)+\bm{c}h\varphi'(t_0)+ h^2\bm{A}(t_0,h)\varphi''(\bm{e}t_0+\bm{c}h).
\end{align}
Since $\bm{A}(t_0,h)$ is nonsingular, 
equations \eqref{cuoi10} and \eqref{cuoi2} imply that
$$
\varphi''(t_0+c_ih)=f(t_0+c_ih,\varphi(t_0+c_ih)),\qquad i=1,\ldots,s.
$$
Therefore, if we choose $u(t)=\varphi(t)$, then $u(t)$ 
satisfies equality \eqref{evenpoly}, which proves the theorem. 
\end{proof}

\begin{rem}\rm 
It follows from the proof of Theorem~\ref{theorem3.10} that 
the existence of a solution to the system \eqref{eqm4.1} implies 
the existence of the collocation solution $u(t)$. 
In other words, the collocation solution $u(t)$ exists and is unique for a given $h>0$ 
whenever the associated FRKN method is applicable. 
For a sufficiently small $h>0$, we pointed out in the proof of Theorem~\ref{theorem3.10} that 
the solution to the system~\eqref{eqm4.1} exists and is unique, and so we can conclude 
that the collocation solution exists and is unique as well. 
Moreover, as an element of $\bm{H}$, it is
a linear combination of the given basis functions $\{u_k(t)\}_{k=1}^{s}$ 
augmented with $\{1,t\}$.
\end{rem}

\section{Separable basis}
\label{sec:Separable}

%The below result shows that if we look for FRKN methods which have $t$-independent coefficients then we only need to deal with separable methods.

%\subsection{Definition and characterization}

It is clear from our discussion so far that the analysis of FRKN methods 
is complicated by the fact that they have variable coefficients 
that depend on $t$, $h$ and the basis functions in a non-trivial manner. 
In \cite{FRK} and \cite{SFRK}, we introduced the class
of separable methods to overcome the difficult for FRK methods. Here we extend this notion to FRKN methods.

%The main drawback of FRKN methods is  
%that their coefficients depend on $t$ and $h$ in a non-trivial 
%manner that makes it difficult to have a deeper analysis of the behavior of these methods.
%However, there exists a particular class with coefficients that 
%are {\em independent} of $x$. Similar to the case with functionally-fitted Runge-Kutta methods ~\cite{FRK, SFRK}, 
%we define a class of {\em separable}
%methods as follows

\begin{defn}[Separable basis]
A set of linearly independent functions $\{u_k\}_{k=1}^s$ is said to be 
a separable basis for FRKN methods if $\bm{u}(t) := (1, t, u_1(t), \ldots, u_s(t))^T$ satisfies
\begin{equation}
\label{xyplusmatrix}
\bm{u}(t+h) = \mbox{\boldmath$\mathcal F$}(h)\bm{u}(t) = \mbox{\boldmath$\mathcal F$}(t)\bm{u}(h), \quad \forall t,h \in \mathbb{R},
\end{equation}
where $\mbox{\boldmath$\mathcal F$}:\mathbb{R}\to\mathbb{R}^{(s+2)\times (s+2)}$ 
is a suitable matrix function. 
\end{defn}

FRKN methods corresponding to separable bases are called 
separable methods. The following results characterize these methods further. 
Theorem~\ref{thm-Smatrix} gives an indication 
as to what type of basis functions
can be separable. Theorem~\ref{thm-sepbasis-inclusion}
provides an effective procedure for identifying and constructing separable methods.
Theorems~\ref{seponly} and~\ref{indepentcoeff} show that coefficients based on separable functions are 
time-independent (in the sense that they only depend on the current stepsize)
and that no other class of basis functions
can lead to time-independent coefficients.
Theorem~\ref{sepcolcond} indicates that separable methods 
always satisfy the collocation condition.
Omitted proofs are similar to that  found in the indicated references.

\begin{theorem} \cite[Theorem 3.2]{SFRK}
\label{thm-Smatrix}
If $\bm{u}(t) = (1, t, u_1(t), \ldots, u_s(t))^T$ is separable 
according to \eqref{xyplusmatrix}, then there exists a 
constant and nonderogatory matrix $\bm{S} = \mbox{\boldmath$\mathcal F$}'(0)$
such that $\mbox{\boldmath$\mathcal F$}(t)= e^{\bm{S}t}$. Hence from \eqref{xyplusmatrix},
$\bm{u}(t) = \mbox{\boldmath$\mathcal F$}(t)\bm{u}(0) =  e^{\bm{S}t}\bm{u}_0$.
\end{theorem}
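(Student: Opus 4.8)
The plan is to turn the functional equation \eqref{xyplusmatrix} into a one-parameter matrix group and then show that group is necessarily an exponential with a cyclic (hence nonderogatory) generator.

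First I would record that the translates $\{\bm{u}(t):t\in\IR\}$ span all of $\IR^{s+2}$. Because the $s+2$ component functions $1,t,u_1(t),\ldots,u_s(t)$ are linearly independent, one can choose sample points $\tau_0,\ldots,\tau_{s+1}$ for which the matrix $\bm{U}:=[\bm{u}(\tau_0),\ldots,\bm{u}(\tau_{s+1})]$ is nonsingular (otherwise the sampling would exhibit a nontrivial dependence among the functions). Evaluating $\bm{u}(\tau_j+h)=\mbox{\boldmath$\mathcal F$}(h)\bm{u}(\tau_j)$ columnwise then gives the closed form $\mbox{\boldmath$\mathcal F$}(h)=[\bm{u}(\tau_0+h),\ldots,\bm{u}(\tau_{s+1}+h)]\,\bm{U}^{-1}$, from which two facts follow at once: $\mbox{\boldmath$\mathcal F$}(0)=\bm{U}\bm{U}^{-1}=\bm{I}$, and $\mbox{\boldmath$\mathcal F$}$ inherits the smoothness of $\bm{u}$, so it is differentiable.

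Next I would derive the group law. Expanding $\bm{u}(t+s+h)$ in two ways via \eqref{xyplusmatrix} and using that the $\bm{u}(t)$ span $\IR^{s+2}$ yields $\mbox{\boldmath$\mathcal F$}(t+h)=\mbox{\boldmath$\mathcal F$}(t)\mbox{\boldmath$\mathcal F$}(h)$ for all $t,h$. Differentiating this identity in $h$ at $h=0$ produces the linear matrix ODE $\mbox{\boldmath$\mathcal F$}'(t)=\mbox{\boldmath$\mathcal F$}(t)\bm{S}$ with $\bm{S}:=\mbox{\boldmath$\mathcal F$}'(0)$ and initial condition $\mbox{\boldmath$\mathcal F$}(0)=\bm{I}$, whose unique solution is $\mbox{\boldmath$\mathcal F$}(t)=e^{\bm{S}t}$. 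In particular $\bm{u}(t)=\mbox{\boldmath$\mathcal F$}(t)\bm{u}(0)=e^{\bm{S}t}\bm{u}_0$, so $\bm{u}'(t)=\bm{S}\bm{u}(t)$ and hence $\bm{u}^{(k)}(t)=\bm{S}^k\bm{u}(t)$ for every $k\ge 0$.

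The step that needs the most care is the nonderogatory claim, that is, that the minimal polynomial of $\bm{S}$ equals its characteristic polynomial. Let $m(\lambda)$ be the minimal polynomial, of degree $d\le s+2$. From $m(\bm{S})=\bm{0}$ and $\bm{u}^{(k)}=\bm{S}^k\bm{u}$ we get $m(D)\bm{u}(t)=\bm{0}$ with $D=d/dt$, which means that each of the $s+2$ components $1,t,u_1,\ldots,u_s$ is annihilated by the constant-coefficient operator $m(D)$. Since the solution space of $m(D)v=0$ has dimension exactly $d$, and these $s+2$ functions are linearly independent and lie in it, we must have $s+2\le d$; together with $d\le s+2$ this forces $d=s+2$. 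Thus the minimal polynomial has the same degree as the characteristic polynomial and, being a monic divisor of it, coincides with it, which is precisely the assertion that $\bm{S}$ is nonderogatory. I expect the two genuinely substantive points to be the spanning argument of the first step and this dimension count, both of which draw on the linear independence of the augmented basis; everything in between is formal manipulation of the group law.
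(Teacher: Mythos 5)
Your proposal is correct, and it follows essentially the same route as the proof the paper defers to in \cite[Theorem~3.2]{SFRK}: use the linear independence of the $s+2$ component functions to get a nonsingular sample matrix, deduce $\mbox{\boldmath$\mathcal F$}(0)=\bm{I}$ and the group law $\mbox{\boldmath$\mathcal F$}(t+h)=\mbox{\boldmath$\mathcal F$}(t)\mbox{\boldmath$\mathcal F$}(h)$, solve the resulting matrix ODE to obtain $\mbox{\boldmath$\mathcal F$}(t)=e^{\bm{S}t}$, and then force the minimal polynomial of $\bm{S}$ to have full degree $s+2$ by a dimension count in the solution space of $m(D)v=0$. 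The only cosmetic issue is your reuse of the letter $s$ as a shift variable in the group-law step, which clashes with the stage count; otherwise the two substantive steps you flag (the spanning argument and the dimension count) are exactly the ones that carry the proof.
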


\begin{theorem} \cite[Theorem 3.3]{SFRK}
\label{thm-sepbasis-inclusion}
Given a set of linearly independent functions $\{u_k(t)\}_{k=1}^{s}$,
let $\bm{H}' = \Span\{u_1'(t), \ldots, u_s'(t)\}$.
Then $\bm{u}(t) = (1,t, u_1(t), \ldots, u_s(t))^T$ 
is separable if and only if $\bm{H}' \subset \bm{H}$.
\end{theorem}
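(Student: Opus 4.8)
The plan is to prove the two implications separately, in both cases recognizing that the separability condition is nothing more than a disguised statement that $\bm{u}$ solves a homogeneous linear system of ODEs with \emph{constant} coefficients. Throughout I use that the augmented set $\{1, t, u_1(t), \ldots, u_s(t)\}$ is linearly independent, so that $\bm{H}$ has dimension $s+2$ and membership in $\bm{H}$ is a representation with real constant coefficients.

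For the forward implication, assume $\bm{u}$ is separable. Then Theorem~\ref{thm-Smatrix} immediately supplies a constant matrix $\bm{S}$ with $\bm{u}(t) = e^{\bm{S}t}\bm{u}_0$, where $\bm{u}_0 = \bm{u}(0)$. Differentiating term by term gives $\bm{u}'(t) = \bm{S}e^{\bm{S}t}\bm{u}_0 = \bm{S}\bm{u}(t)$, so each entry $u_k'(t)$ is a fixed real-linear combination of the entries $1, t, u_1(t), \ldots, u_s(t)$ of $\bm{u}(t)$. Hence $u_k'(t) \in \bm{H}$ for every $k$, i.e. $\bm{H}' \subset \bm{H}$, and this direction needs essentially no further work.

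For the reverse implication, assume $\bm{H}' \subset \bm{H}$ and build $\bm{S}$ by hand. Membership $u_k'(t) \in \bm{H}$ yields constants with $u_k'(t) = s_{k,0} + s_{k,1}t + \sum_{j=1}^s s_{k,j+1}u_j(t)$; appending the trivial identities $(1)' = 0$ and $(t)' = 1$ packages all these coefficients into a single constant $(s+2)\times(s+2)$ matrix $\bm{S}$ satisfying $\bm{u}'(t) = \bm{S}\bm{u}(t)$. I would then read off the exponential form: since $\frac{d}{dt}\big(e^{-\bm{S}t}\bm{u}(t)\big) = e^{-\bm{S}t}\big(\bm{u}'(t) - \bm{S}\bm{u}(t)\big) = 0$, the vector $e^{-\bm{S}t}\bm{u}(t)$ is constant and equals $\bm{u}_0$, so $\bm{u}(t) = e^{\bm{S}t}\bm{u}_0$. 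Setting $\mbox{\boldmath$\mathcal F$}(t) = e^{\bm{S}t}$ and invoking the semigroup identity $e^{\bm{S}(t+h)} = e^{\bm{S}h}e^{\bm{S}t} = e^{\bm{S}t}e^{\bm{S}h}$ then delivers $\bm{u}(t+h) = \mbox{\boldmath$\mathcal F$}(h)\bm{u}(t) = \mbox{\boldmath$\mathcal F$}(t)\bm{u}(h)$, which is exactly~\eqref{xyplusmatrix}.

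The computational steps are routine: differentiation in the forward direction and the integrating-factor identity in the reverse one. There is no deep obstacle; the real content is the observation that both conditions are equivalent to $\bm{u}$ solving the autonomous linear system $\bm{u}'(t) = \bm{S}\bm{u}(t)$, after which the separability structure is just the semigroup property of $e^{\bm{S}t}$. The point deserving the most care is assembling a single constant matrix $\bm{S}$ that simultaneously reproduces the derivatives of $1$, $t$, and all the $u_k$ — here the linear independence of the augmented set keeps the extracted coefficients unambiguous — together with checking that the initial vector is $\bm{u}(0) = \bm{u}_0$, so that the unique solution of the system is indeed $e^{\bm{S}t}\bm{u}_0$ rather than merely a candidate of that form.
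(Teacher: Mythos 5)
Your proof is correct and follows essentially the same route as the paper's (the paper defers to \cite[Theorem~3.3]{SFRK}, where the argument is exactly this: the forward direction reads $\bm{u}'(t)=\bm{S}\bm{u}(t)$ off the exponential form from Theorem~\ref{thm-Smatrix}, and the converse assembles the constant matrix $\bm{S}$ from the expansion coefficients and exponentiates, with the semigroup identity yielding \eqref{xyplusmatrix}). Your attention to the linear independence of the augmented set $\{1,t,u_1,\ldots,u_s\}$ and to the initial condition $\bm{u}(0)=\bm{u}_0$ covers the only points needing care.
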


%\begin{proof}
%$(\Longleftarrow)$
%$\bm{H}'\subset \bm{H}$ means that there exists
%a constant matrix $\bm{S}$ such that
%$
%\bm{u}'(x)=\bm{S}\bm{u}(x).
%$
%Therefore
%$
%\bm{u}(x) = e^{\bm{S}x}\bm{u}(0),
%$
%and so
%$
%\bm{u}(x+y)=e^{\bm{S}(x+y)}\bm{u}(0)
%=e^{\bm{S}y}\bm{u}(x),
%$
%which means that $\bm{u}$ is separable.
%$(\Longrightarrow)$ From the proof of 
%Theorem~\ref{thm-Smatrix}, we have
%$\bm{u}'(x)=\mbox{\boldmath$\mathcal F$}'(0)\bm{u}(x)$. Hence
%$\bm{H}'\subset\bm{H}$. 
%\end{proof}

\begin{theorem} \cite[Theorem 3.4]{SFRK}
\label{seponly}
The coefficients of an $s$-stage FRKN method are time-independent if and only if
its associated basis is separable. % according to \eqref{xyplusmatrix}.
\end{theorem}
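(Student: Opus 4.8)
The plan is to prove both implications through the characterization in Theorem~\ref{thm-sepbasis-inclusion}, which says that the basis is separable if and only if $\bm{H}' \subseteq \bm{H}$, i.e. $u_k' \in \bm{H}$ for every $k$. I would use throughout that, under the collocation condition (Definition~\ref{defn2.2}) and Theorem~\ref{theorem2.3}, the matrix $\bm{A}(t,h)$ and the vectors $\bm{b}(t,h),\bm{d}(t,h)$ are the \emph{unique} quantities for which the three relations in \eqref{fFRKN} hold for all $u_k$; by linearity they then hold for every $v \in \bm{H}$, and this uniqueness is the lever for both directions.

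For the ``if'' direction I would suppose the basis is separable, so that $\bm{H}$ is invariant under translation: for fixed $t$ and any $v \in \bm{H}$ the function $w(s) := v(t+s)$ again lies in $\bm{H}$, since by \eqref{xyplusmatrix} each $u_k(t+s)$ is a fixed linear combination of $1,s,u_1(s),\ldots,u_s(s)$. Writing the base-point-zero relations for this $w$ and re-expressing $w(h)=v(t+h)$, $w'(0)=v'(t)$, $w''(c_ih)=v''(t+c_ih)$, I obtain
\[
v(t+h)=v(t)+hv'(t)+h^2\bm{b}(0,h)^T v''(\bm{e}t+\bm{c}h),
\]
together with the analogous identities carrying $\bm{d}(0,h)$ and $\bm{A}(0,h)$. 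These hold for every $v\in\bm{H}$, so the uniqueness above forces $\bm{b}(t,h)=\bm{b}(0,h)$, $\bm{d}(t,h)=\bm{d}(0,h)$ and $\bm{A}(t,h)=\bm{A}(0,h)$, and the coefficients are time-independent.

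For the ``only if'' direction I would assume $\bm{A}(t,h)=\bm{A}(h)$, $\bm{b}(t,h)=\bm{b}(h)$, $\bm{d}(t,h)=\bm{d}(h)$, and the first goal is to remove the stage second-derivatives from \eqref{fFRKN} using $\bm{E}(t,h)=h^2\bm{A}(h)\bm{F}(t,h)$ from the Remark, which gives $h^2\bm{F}(t,h)^T=\bm{E}(t,h)^T\bm{A}(h)^{-T}$. Substituting this into the first relation of \eqref{fFRKN} and setting $\bm{\beta}(h):=\bm{A}(h)^{-T}\bm{b}(h)$ yields, for each $k$,
\[
u_k(t+h)=u_k(t)+hu_k'(t)+\sum_{i=1}^s\beta_i(h)\big[u_k(t+c_ih)-u_k(t)-c_ihu_k'(t)\big],
\]
in which only translates of $u_k$ and the value $u_k'(t)$ appear. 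Doing the same to the second relation of \eqref{fFRKN} produces a companion identity for $u_k'(t+h)$ with the $t$-independent vector $\bm{A}(h)^{-T}\bm{d}(h)$. Solving the first identity for $u_k'(t)$ (its coefficient $h\,(1-\sum_i\beta_i(h)c_i)$ is nonzero for generic $h$) and inserting it into the second would express $u_k'$, at every point, as a linear combination with $t$-independent coefficients of finitely many translates $t\mapsto u_k(t+\tau)$ of $u_k$ alone.

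The hard part will be the final upgrade from this translate identity to the membership $u_k'\in\bm{H}$. Because the shifts $\tau$ scale with $h$ while the left-hand side $u_k'$ is independent of $h$, I expect to let $h\to0$ and match the successive orders in $h$ (equivalently, differentiate the identity in $h$ and re-substitute), which should force each $u_k$ to satisfy a fixed finite-order linear constant-coefficient relation, so that the translation orbit of $u_k$ spans a finite-dimensional space closed under differentiation; this gives $u_k'\in\bm{H}$ and hence $\bm{H}'\subseteq\bm{H}$. This limiting argument is the delicate point, and I would carry it out exactly as in the FRK setting of \cite[Theorem~3.4]{SFRK}, the only genuine change being the elimination step above, which here must account for the extra $\{1,t\}$ and the second-derivative stage terms specific to the Nystr\"om form \eqref{fFRKN}.
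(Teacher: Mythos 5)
Your ``if'' direction is correct and is essentially the argument the paper itself gives for Theorem~\ref{indepentcoeff}: translation invariance of $\bm{H}$ (equivalently $\bm{u}(t+h)=e^{\bm{S}t}\bm{u}(h)$ via \eqref{xyplusmatrix}) transports the base-point-zero relations \eqref{fFRKN} to base point $t$, and uniqueness (Theorem~\ref{theorem2.3}, applicable at every $t$ by Theorem~\ref{sepcolcond}) forces the coefficients to agree. The elimination step opening your ``only if'' direction is also correct: with $\bm{\beta}(h)=\bm{A}(h)^{-T}\bm{b}(h)$ one does obtain $u_k(t+h)-u_k(t)-hu_k'(t)=\sum_{i=1}^s\beta_i(h)\bigl[u_k(t+c_ih)-u_k(t)-c_ihu_k'(t)\bigr]$ for all $t$ and a.e.\ $h$.

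However, the step you defer --- upgrading this translate identity to $u_k'\in\bm{H}$ --- is precisely where the content of the theorem lies (the paper omits this proof and refers to \cite{SFRK}), and your sketch of it has a genuine gap. Expanding the displayed identity in powers of $h$ gives $\sum_{n\ge 2}\frac{h^n}{n!}\,u_k^{(n)}(t)\bigl[1-\sum_i\beta_i(h)c_i^{\,n}\bigr]=0$; the leading coefficients $1-\sum_i\beta_i(0)c_i^{\,n}$ vanish for $n=2,\dots,s+1$ (consistency with the classical collocation limit, for which the monomials $t^2,\dots,t^{s+1}$ satisfy the identity exactly), so order-by-order matching yields no information before order $s+2$, where --- provided the relevant leading coefficient is nonzero, which must be checked --- it produces a recursion expressing each $u_k^{(n)}$, $n\ge s+2$, as a constant-coefficient combination of $u_k^{(2)},\dots,u_k^{(s+1)}$. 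Even granting all of this, your concluding inference ``each $u_k$ satisfies a fixed constant-coefficient relation, hence $u_k'\in\bm{H}$'' does not follow as stated: the set $\{t^2,t^4\}$ satisfies a constant-coefficient ODE componentwise, yet $\Span\{1,t,t^2,t^4\}$ is not closed under differentiation and the basis is not separable. What actually closes the argument is that the recursion coefficients are independent of $k$, so the $s$ linearly independent functions $u_1'',\dots,u_s''$ all lie in the $s$-dimensional solution space of one and the same order-$s$ constant-coefficient ODE; by a dimension count they span it, that span is closed under differentiation, and integrating back (absorbing constants of integration into $\Span\{1,t\}$) yields $\bm{H}'\subseteq\bm{H}$ and hence separability via Theorem~\ref{thm-sepbasis-inclusion}. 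Without the uniformity in $k$, the dimension count, and the nondegeneracy check at order $s+2$, the final step of your proof does not go through.
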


\begin{theorem} \cite[Theorem 3.5]{SFRK}
\label{sepcolcond}
A separable basis satisfies the collocation condition at any $t$.
%if it does at $t=0$.
\end{theorem}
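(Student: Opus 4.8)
The plan is to collapse the whole statement onto a single analytic fact — that for a separable basis the Wronskian $W(u_1'',\ldots,u_s'')(t)$ never vanishes — and then let the machinery already assembled in Section~\ref{sec:FRKN} do the rest. First I would note that by Theorem~\ref{thm-Smatrix} separability gives $\bm u(t)=e^{\bm S t}\bm u_0$, so every $u_k$ is an entire function of $t$; consequently the entries of $\bm E(t,h)$ and $\bm F(t,h)$, and hence the scalars $\det\bm E(t,h)$ and $\det\bm F(t,h)$, are analytic in $h$ for each fixed $t$. A nonzero analytic function has only isolated zeros and so vanishes on a set of measure zero, whence it suffices to show that neither determinant is identically zero in $h$. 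The Remark following Definition~\ref{defn2.2} already supplies this under the hypothesis $W(u_1'',\ldots,u_s'')(t)\neq 0$: it records that this Wronskian being nonzero is a sufficient condition for both $\det\bm E(t,h)\neq 0$ and $\det\bm F(t,h)\neq 0$ at all sufficiently small $h>0$, and analyticity in $h$ then upgrades ``small $h$'' to ``almost every $h$ on $[0,T]$.'' So the entire proof reduces to establishing $W(u_1'',\ldots,u_s'')(t)\neq 0$ for every $t$.

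To do that I would exploit the constant-coefficient differential equation hidden in the separability relation. Since $\bm u'(t)=\bm S\bm u(t)$ with $\bm S$ constant and $D^m\bm u=\bm S^m\bm u$, the Cayley--Hamilton theorem gives $p(D)\bm u=p(\bm S)\bm u=0$ for the characteristic polynomial $p$ of $\bm S$, of degree $s+2$; that is, each component of $\bm u$ — in particular every $u_k$, together with $1$ and $t$ — is annihilated by the scalar operator $p(D)$. Feeding the solutions $1$ and $t$ into $p(D)v=0$ forces the constant and linear coefficients of $p$ to vanish, so $\lambda^2\mid p(\lambda)$ and we may write $p(\lambda)=\lambda^2 q(\lambda)$ with $\deg q=s$. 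Because $D^2$ and $q(D)$ commute, $0=p(D)u_k=q(D)\,u_k''$, so every second derivative $u_k''$ solves the order-$s$ constant-coefficient equation $q(D)w=0$.

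It then remains to see that $u_1'',\ldots,u_s''$ are linearly independent: any relation $\sum_k c_k u_k''=0$ means $\sum_k c_k u_k$ is affine, i.e.\ $\sum_k c_k u_k-\beta t-\alpha=0$ for some $\alpha,\beta$, which by the assumed linear independence of $\{1,t,u_1,\ldots,u_s\}$ forces all $c_k=0$. Hence $u_1'',\ldots,u_s''$ form a fundamental system of $q(D)w=0$, and Abel's (Liouville's) identity shows that the Wronskian of a fundamental system of a constant-coefficient linear equation equals a nonzero constant times $e^{-a_{s-1}t}$, where $a_{s-1}$ is the subleading coefficient of $q$, hence is nowhere zero. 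This yields $W(u_1'',\ldots,u_s'')(t)\neq 0$ for all $t$ and closes the argument.

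I expect the main obstacle to be precisely this last nowhere-vanishing claim. The leading-order expansion only governs the limit $h\to 0$ and could a priori be defeated at isolated values of $t$ where the Wronskian degenerates, so the crux is recognizing that separability deflates the order-$(s+2)$ annihilator to the order-$s$ operator $q(D)$, which brings Abel's identity into play and rules out any such degeneracy at every $t$. By comparison, the analyticity/measure-zero reduction and the appeal to the small-$h$ expansion are routine.
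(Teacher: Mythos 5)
Your argument is correct, but it reaches the conclusion by a route the paper does not take. The paper in fact prints no proof here: it cites the FRK analogue \cite[Theorem~3.5]{SFRK} and notes that omitted proofs are ``similar to that found in the indicated references''; the argument there works directly at the level of the matrices $\bm{E}(t,h)$ and $\bm{F}(t,h)$, using analyticity in $h$ together with the algebraic structure $\bm{u}(t)=e^{\bm{S}t}\bm{u}_0$ with $\bm{S}$ nonderogatory (so that $\bm{u}_0$ is a cyclic vector and the relevant Krylov-type leading coefficient of the Taylor expansion of each determinant is nonsingular). You instead collapse everything onto the single analytic fact $W(u_1'',\ldots,u_s'')(t)\neq 0$ and prove it by an ODE-theoretic deflation: Cayley--Hamilton gives $p(D)\bm{u}=0$, the presence of $1$ and $t$ forces $p(\lambda)=\lambda^2 q(\lambda)$ with $q$ monic of degree $s$, so the $u_k''$ are $s$ linearly independent solutions of $q(D)w=0$ and Abel's identity makes their Wronskian nowhere zero. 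This is a genuinely different and somewhat stronger intermediate result (the Wronskian is nonzero at \emph{every} $t$, not merely sufficient for small $h$ at each fixed $t$), and it lets you invoke the paper's own remark after Definition~\ref{defn2.2} as a black box before upgrading ``small $h$'' to ``almost every $h$'' by analyticity; the price is that it leans on the classical existence--uniqueness theory for linear ODEs, whereas the paper's route is pure linear algebra on $\bm{S}$. One point you should make explicit: your linear-independence step for $u_1'',\ldots,u_s''$ uses independence of the \emph{augmented} set $\{1,t,u_1,\ldots,u_s\}$, not just of $\{u_k\}_{k=1}^s$ as stated in the definition of a separable basis. This is indeed what the paper implicitly assumes (it is equivalent to $\bm{S}$ being nonderogatory, per the remark following Theorem~\ref{indepentcoeff}, and without it $\bm{E}(t,h)$ is identically singular and the theorem is false), so it is not a gap relative to the paper's framework, but it deserves a sentence.
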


\begin{theorem} \cite[Theorem 3.1]{SFRK}
\label{indepentcoeff}
If $\{u_k\}_{k=1}^s$ is separable,  %i.e., if \eqref{xyplusmatrix} hold, 
then the coefficients of the corresponding $s$-stage FRKN method exist
almost everywhere with respect to $h$ and are independent of $t$.
These coefficients are $(\bm{c}, \bm{A}(0,h), \bm{b}(0,h), \bm{d}(0,h))$.
\end{theorem}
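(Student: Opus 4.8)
The plan is to exploit the exponential structure that separability confers through Theorem~\ref{thm-Smatrix}, namely $\bm{u}(\tau)=e^{\bm{S}\tau}\bm{u}_0$ for a constant matrix $\bm{S}=\mbox{\boldmath$\mathcal F$}'(0)$, and to show that the entire $t$-dependence of the matrices defining the coefficients factors through a single invertible block that cancels when the coefficients are formed. Before the computation it is worth noting that existence almost everywhere and time-independence already follow from Theorems~\ref{sepcolcond} and~\ref{seponly}; the genuinely new content is to pin down that the time-independent value is precisely the evaluation at $t=0$.

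First I would record how the coefficients are extracted from \eqref{fFRKN}. As noted in the Remark, the third relation reads $\bm{E}(t,h)=h^2\bm{A}(t,h)\bm{F}(t,h)$, so $\bm{A}(t,h)=h^{-2}\bm{E}(t,h)\bm{F}(t,h)^{-1}$. Stacking the first and second relations of \eqref{fFRKN} over $k=1,\ldots,s$ gives, in the same way, row-vector identities $\bm{p}(t,h)=h^2\bm{b}(t,h)^T\bm{F}(t,h)$ and $\bm{q}(t,h)=h\bm{d}(t,h)^T\bm{F}(t,h)$, where $\bm{p}$ and $\bm{q}$ collect the left-hand sides $u_k(t+h)-u_k(t)-hu_k'(t)$ and $u_k'(t+h)-u_k'(t)$ respectively. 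Hence $\bm{b}(t,h)^T=h^{-2}\bm{p}(t,h)\bm{F}(t,h)^{-1}$ and $\bm{d}(t,h)^T=h^{-1}\bm{q}(t,h)\bm{F}(t,h)^{-1}$. By Theorem~\ref{sepcolcond} a separable basis satisfies the collocation condition at any $t$, so $\bm{F}(t,h)$ and $\bm{E}(t,h)$ are nonsingular almost everywhere in $h$; these expressions are therefore well defined almost everywhere, settling the existence claim.

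The core step is a factorization. Using $\bm{u}(t+s)=e^{\bm{S}t}\bm{u}(s)$ together with the fact that $\bm{S}$ commutes with $e^{\bm{S}t}$, so that $\bm{u}''(t+s)=e^{\bm{S}t}\bm{u}''(s)$ and $\bm{u}'(t+s)=e^{\bm{S}t}\bm{u}'(s)$ for all $s$, I would show that each defining matrix splits as its value at $t=0$ times a purely $t$-dependent factor. Let $\bm{G}(t)$ be the trailing $s\times s$ block of $e^{\bm{S}t}$ (the rows and columns indexed by $u_1,\ldots,u_s$). The vectors entering the columns of $\bm{E}$ and $\bm{F}$ and the entries of $\bm{p}$ and $\bm{q}$, namely $\bm{u}(c_ih)-\bm{u}_0-c_ih\bm{u}'(0)$, $\bm{u}''(c_ih)$, $\bm{u}(h)-\bm{u}_0-h\bm{u}'(0)$ and $\bm{u}'(h)-\bm{u}'(0)$, all have vanishing first two components, because the $1$ and $t$ rows cancel identically. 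Consequently only the block $\bm{G}(t)$ acts on them, and one obtains $\bm{F}(t,h)=\bm{F}(0,h)\bm{G}(t)^T$, $\bm{E}(t,h)=\bm{E}(0,h)\bm{G}(t)^T$, $\bm{p}(t,h)=\bm{p}(0,h)\bm{G}(t)^T$ and $\bm{q}(t,h)=\bm{q}(0,h)\bm{G}(t)^T$.

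Finally I would substitute these factorizations into the formulas from the first step. Since $\bm{F}(t,h)^{-1}=\bm{G}(t)^{-T}\bm{F}(0,h)^{-1}$, the $\bm{G}(t)^T$ factors cancel, giving $\bm{A}(t,h)=h^{-2}\bm{E}(0,h)\bm{F}(0,h)^{-1}=\bm{A}(0,h)$ and likewise $\bm{b}(t,h)=\bm{b}(0,h)$ and $\bm{d}(t,h)=\bm{d}(0,h)$, which is exactly the assertion and is consistent with the time-independence of Theorem~\ref{seponly}. The invertibility of $\bm{G}(t)$ needed for the cancellation is not automatic from $\det e^{\bm{S}t}\neq 0$ alone, but it follows from Theorem~\ref{sepcolcond}: were $\bm{G}(t)$ singular at some $t$, the identity $\bm{F}(t,h)=\bm{F}(0,h)\bm{G}(t)^T$ would force $\bm{F}(t,h)$ singular for every $h$, contradicting the collocation condition at that $t$. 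I expect this verification, namely confirming that the $\{1,t\}$ rows drop out so that the $t$-dependence is confined to the single cancelling block $\bm{G}(t)$ and securing its invertibility, to be the main obstacle; the remaining manipulations are routine.
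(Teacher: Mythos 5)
Your proof is correct, and it rests on the same engine as the paper's: the propagation identity $\bm{u}^{(j)}(t+\tau)=e^{\bm{S}t}\bm{u}^{(j)}(\tau)$ supplied by Theorem~\ref{thm-Smatrix}, together with Theorem~\ref{sepcolcond} and Theorem~\ref{theorem2.3} for existence. Where you diverge is in the finishing move. The paper never inverts anything: it simply left-multiplies the $t=0$ defining relations (in their full $(s+2)$-vector form, e.g.\ \eqref{eqm10.3}) by $e^{\bm{S}t}$ to conclude that the coefficients $\bm{A}(0,h)$, $\bm{b}(0,h)$, $\bm{d}(0,h)$ satisfy the relations \eqref{fFRKN} at every $t$, and then invokes the uniqueness of Theorem~\ref{theorem2.3} to identify them with the coefficients at $t$. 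You instead solve explicitly, $\bm{A}(t,h)=h^{-2}\bm{E}(t,h)\bm{F}(t,h)^{-1}$ and its analogues, and show the $t$-dependence cancels through the factorizations $\bm{E}(t,h)=\bm{E}(0,h)\bm{G}(t)^T$, $\bm{F}(t,h)=\bm{F}(0,h)\bm{G}(t)^T$, etc., where $\bm{G}(t)$ is the trailing $s\times s$ block of $e^{\bm{S}t}$; your observation that the $\{1,t\}$ components of all the relevant vectors vanish, so that only this block acts, is the right justification, and your proof that $\bm{G}(t)$ is invertible (via the collocation condition at $t$) correctly plugs the one hole this route opens. The trade-off: your version is constructive and exhibits the coefficients as explicit matrix quotients, at the cost of the extra invertibility lemma; the paper's version avoids that lemma entirely by outsourcing the conclusion to uniqueness. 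One small caution: you remark that time-independence ``already follows'' from Theorem~\ref{seponly}, but in the source reference the analogue of Theorem~\ref{seponly} is proved after (and using) the analogue of the present theorem, so leaning on it here would be circular; since your argument does not actually use it, this is only a matter of phrasing.
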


\begin{proof}
We give the proof of this theorem to emphasize results that we use later.
Since $\{u_k\}_{k=1}^s$ is separable, it follows from Theorem~\ref{sepcolcond} that 
the collocation condition is satisfied almost everywhere with respect to $h$ for any given $t$. 
This and Theorem~\ref{theorem2.3} imply that the coefficients of the corresponding $s$-stage FRKN method 
exist almost everywhere with respect to $h$ for any given $t$. 

To prove that these coefficients are independent of $t$, we show that the following equations
hold for all $t$ and $1 \le k \le s$:
\begin{equation}
\label{eqm10.1}
\begin{split}
u_k(t+h)&=u_k(t)+hu_k'(t)+h^2\bm{b}(0,h)^Tu_k''(\bm{e}t+\bm{c}h),\\
u'_k(t+h)&=u'_k(t)+h\bm{d}(0,h)^Tu_k''(\bm{e}t+\bm{c}h),\\
u_k(\bm{e}t+\bm{c}h)&=\bm{e}u_k(t)+h\bm{c}u'_k(t)+h^2\bm{A}(0,h)u_k''(\bm{e}t+\bm{c}h).
\end{split}
\end{equation}

We only prove the second equality since 
the other equalities can be obtained similarly. 
Proving the second equality in \eqref{eqm10.1} is equivalent to proving
\begin{equation}
\label{eqm10.2}
\bm{u}'(t+h) = \bm{u}'(t) + h\begin{bmatrix}
\bm{u}''(t+c_1h) & \bm{u}''(t+c_2h)&\cdots & \bm{u}''(t+c_sh)
\end{bmatrix}
\bm{d}(0,h).
\end{equation}
Consider first the case $t=0$, equation \eqref{eqm10.2} becomes
\begin{equation}
\label{eqm10.3}
\bm{u}'(h) = \bm{u}'(0) + h\begin{bmatrix}
\bm{u}''(c_1h) & \bm{u}''(c_2h)&\cdots & \bm{u}''(c_sh)
\end{bmatrix}
\bm{d}(0,h).
\end{equation}
This equation holds by Definition \ref{ffRKdef}. 
Consider now the case $t\ne0$,
since $\bm{u}(t)=e^{\bm{S}t}\bm{u}_0$
as shown in Theorem~\ref{thm-Smatrix}, one gets
\begin{equation}
\label{eqm10.4}
\bm{u}'(t+h)= e^{\bm{S}t}\bm{u}'(h),\quad \bm{u}''(t+c_ih)= e^{\bm{S}t}\bm{u}''(c_ih),\quad i=1,...,s.
\end{equation}
Equation \eqref{eqm10.2} follows from \eqref{eqm10.3} and \eqref{eqm10.4}. 
%This proves the second equality in \eqref{eqm10.1}. Theorem~\ref{indepentcoeff} is proved. 
\end{proof}

\begin{rem}\rm
Theorem~\ref{thm-Smatrix} states
that for any separable system of functions $\{u_i(t)\}_{i=1}^s$, the vector function 
$\bm{u}(t)=(1,t,u_1(t), \ldots, u_s(t))^T$ is of the form $\bm{u}(t)=e^{\bm{S}t}\bm{u}_0$
with a nonderogatory, singular matrix $\bm{S}$ and a suitable $\bm{u}_0$, 
and this characterizes completely what a separable system of basis functions is. 
As explained in~\cite{SFRK}, the constrain that $\bm{S}$ is
singular amounts to having the constant function $1$ in $\bm{u}(t)$, 
and the constrain that it is nonderogatory amounts to having functions  
that are linearly independent.
Also note from Theorem~\ref{thm-sepbasis-inclusion} that if 
$\bm{u}(t)=(1,t,u_1(t), \ldots, u_s(t))^T$  is separable, then $\Span\{u_1''(t), \ldots, u_s''(t)\} \subset \bm{H}$. 
\end{rem}

%\begin{rem}\rm
%\label{simple-colcond}
%%The result means that the coefficients are independent of $x$.
%The collocation condition in \S\ref{colcond} can also be simplified
%when the basis functions can be separated according to \eqref{xyplusmatrix} 
%in the sense that if the condition is satisfied at $x=0$ then 
%it is satisfied at any $x$.
%\end{rem}

\begin{rem}\label{separable-examples}\rm
As Theorem~\ref{indepentcoeff} claims the coefficients of an $s$-stage FRKN 
method depend only on $h$ when $\{u_k\}_{k=1}^s$ satisfy \eqref{xyplusmatrix}, i.e, the set $\{u_k\}_{k=1}^s$ is 
a separable basis. 
Examples of separable bases include:
\begin{enumerate}
\item $\{u_k(x)\}_{k=1}^{s} = 
\{x^2, \ldots, x^{s+1}\}$.
\item $\{u_k(x)\}_{k=1}^{2n} = 
\{\sin(\omega_1 x), \cos(\omega_1 x), \ldots, \sin(\omega_n x), \cos(\omega_n x)\}$.
\item $\{u_k(x)\}_{k=1}^{2m+n-1} = \{\sin(\omega x), 
\cos(\omega x), \ldots, \sin(m\omega x),\cos(m\omega x)\}
\cup \{x^2, \ldots, x^{n}\}$.
\item $\{u_k(x)\}_{k=1}^{2(n+1)} = \{\sin(\omega x), \cos(\omega x), 
%x\sin(\omega x), x\cos(\omega x), 
\ldots, x^n\sin(\omega x), x^n\cos(\omega x)\}$.
\item $\{u_k(x)\}_{k=1}^{n+2m+1} = \{x^2, \ldots, x^{n}, 
\exp(\pm w x),x\exp(\pm w x), \ldots, x^m\exp(\pm w x)\}$.
\end{enumerate}
\end{rem}

Therefore, algebraic polynomials, exponentials, sine-cosine and hyperbolic sine-cosine 
functions, and various combinations are in this class. 
Combining functions of different type is also called {\em mixed collocation} as done by
Coleman and Duxbury ~\cite{Coleman} who used
the particular set of basis functions 
$\{\sin(\omega x), \cos(\omega x)\} \cup \{1, x, \ldots, x^{s-1}\}$.
%The remainder of our discussion  revolves around separable methods, and 
When the set $\{u_k\}_{k=1}^s$ is 
a separable basis, the coefficients of the corresponding FRKN method are independent of $t$ and 
we will simply set
\begin{equation}
\label{eq:coef}
\bm{A}(h):=\bm{A}(0,h),\quad \bm{b}(h):=\bm{b}(0,h),\quad\bm{d}(h):= \bm{d}(0,h).
\end{equation}
It is also worth noting that the coefficients in \eqref{eq:coef} can be frozen or generated
with a value of $h$ different from the actual step size, yielding general purpose
methods.  But our analysis here focuses only on functionally fitted methods.

\section{Order of Accuracy}
\label{sec:Accuracy}

\subsection{Order}

For the order of accuracy we use the following definition:
\begin{defn}
\label{defnorder}
Let $y(t)$ be the exact solution of \eqref{sys} with $y(t_{n})= y_n$, $y'(t_n) = y'_n$. 
Let $y_{n+1}$, $y'_{n+1}$, and $\bm{Y}_n$ be the approximate values of $y(t_{n+1})$, $y'(t_{n+1})$, and $y(\bm{e}t_n+\bm{c}h)$ 
obtained by an $s$-stage FRKN method. 
Assume that 
\begin{align}
\label{defnorderc}
y(t_{n+1}) - y_{n+1} &= O(h^{p_1 + 1}),\\
\label{defnordera}
 y'(t_{n+1}) - y'_{n+1} &= O(h^{p_2 + 1}),\\
\label{defnorderb}
\bm{Y}_n - y(\bm{e}t_n + \bm{c}h) &= O(h^{p_3+1}).
\end{align}
Then the (global) order of accuracy $p$ and the (global) stage order $r$ of the FRKN method are respectively defined by 
$p=\min \{p_1,p_2\}$ and $r=\min\{p_1,p_2,p_3\}$.
\end{defn}

\begin{rem}
\label{defnaccuracy}\rm 
Definition~\ref{defnorder} is slightly different from the definition used
in \cite{cong91} for the order  of accuracy of RKN methods. 
In \cite{cong91}, equations \eqref{defnorderc} and \eqref{defnordera} 
are also used but equation \eqref{defnorderb} is replaced by 
\begin{equation}
\label{congdef}
y(\bm{e}t_n + \bm{c}h) - \bm{e}y_n - \bm{c}hy_n' - h^2\bm{A}f(\bm{e}t_n + \bm{c}h,y(\bm{e}t_n + \bm{c}h)) = O(h^{p_3+1}). 
\end{equation}
Compared to the definition in \cite{cong91}, Definition~\ref{defnorder} is more direct because
it directly uses the accuracy of stage values $\bm{Y_n}$, whereas the order of accuracy  of  stage values $\bm{Y}_n$ in \cite{cong91} can only be achieved implicitly from equation \eqref{congdef}. 
In either case, the {\em local} stage order is $p_3+1$.

The order of accuracy of FRKN methods was also defined by Ozawa in \cite{O2}, where
he used $r=p_3$ as his definition of stage order, c.f.~\cite[Eq.~(12)]{O2}, instead of $r=\min\{p_1,p_2,p_3\}$, and so his definition of stage order did not include equation~\eqref{defnordera}.
With his definition, Ozawa proved in \cite[Theorem 2]{O2} that 
the (global) stage order of
an $s$-stage FRKN method is $s+1$ and the (global) step order is $s$ for any $(c_i)_{i=1}^s$. 
But his definition has the drawback of not conforming with the definition of the stage order 
of Runge-Kutta methods.
%And this would seem to imply that the stage vector $\bm{Y}_n$ can be computed with a higher order of accuracy than $y_n$ both locally and globally. However, it can be shown both numerically and theoretically that the order of accuracy for computing $\bm{Y_n}$ cannot exceed the order
%of accuracy  for computing $y_n$ for an arbitrary set $(c_i)_{i=1}^s$. 
%This follows from the fact that FRKN methods use the values of $y'_n$ to compute $\bm{Y}_n$. 
\end{rem}

\begin{rem}
\label{smooth1}\rm 
Suppose that $f(t)\in C^{m+n}[0,T]$ 
and that $f(c_ih)=0,i=1,\ldots,n$. Then the function $\zeta(t):=\frac{f(t)}{(t-c_1h)\cdots(t-c_nh)}$ 
has $(c_ih)_{i=1}^n$ as removable singularities. 
Thus, $\zeta(t)$ can be extended to be an element of $C^{m}[0,T]$
as shown in~\cite[Remark~3.1]{FRK}.
\end{rem}

\begin{theorem}
\label{thm3.2}
The order of an $s$-stage FRKN method is $p=s$ and the stage order is $r=s$.  
\end{theorem}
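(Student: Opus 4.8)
The plan is to reduce everything to the collocation solution and then to control how far it drifts from the exact solution through its \emph{defect}. By Theorem~\ref{theorem3.10} the numerical outputs are $y_{n+1}=u(t_n+h)$, $y'_{n+1}=u'(t_n+h)$ and $Y_{n,i}=u(t_n+c_ih)$, where $u\in\bm{H}$ is the collocation solution determined by \eqref{evenpoly}. Writing $e(t):=u(t)-y(t)$ for the exact solution $y$ with $y(t_n)=y_n$ and $y'(t_n)=y'_n$, the three quantities in \eqref{defnorderc}--\eqref{defnorderb} of Definition~\ref{defnorder} are exactly $-e(t_n+h)$, $-e'(t_n+h)$ and $e(\bm{e}t_n+\bm{c}h)$. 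Hence it suffices to estimate $e$ and $e'$ on $[t_n,t_n+h]$.

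The first real step is to introduce the defect $\delta(t):=u''(t)-f(t,u(t))$. Since $u=\alpha_0+a_0t+\sum_k a_k u_k$, we have $u''\in\Span\{u_1'',\dots,u_s''\}$, and by the collocation equations \eqref{evenpoly} the defect $\delta$ vanishes at the $s$ points $t_n+c_ih$. Provided $u''$ and its derivatives are bounded uniformly in $h$ on the step (so that $\delta$ is smooth with $\delta^{(k)}=O(1)$), Remark~\ref{smooth1} lets me factor
\[
\delta(t)=\Big(\prod_{i=1}^{s}(t-t_n-c_ih)\Big)\zeta(t),\qquad \zeta\ \text{bounded},
\]
so that $\delta(t)=O(h^{s})$ uniformly on $[t_n,t_n+h]$. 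The uniform bound on $u''$ is the point where the non-polynomial basis must be handled: $u''$ interpolates, in $\Span\{u_1'',\dots,u_s''\}$, the values $f(t_n+c_ih,Y_{n,i})$, which are $O(1)$ perturbations of samples of the smooth map $t\mapsto f(t,y(t))=y''(t)$, and the required control follows from the generalized Taylor/Wronskian structure guaranteed by the collocation condition of Definition~\ref{defn2.2}.

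With the defect controlled, $u''=f(\cdot,u)+\delta$ and $y''=f(\cdot,y)$ share the data $e(t_n)=e'(t_n)=0$, so by the Lipschitz continuity of $f$,
\[
e''(t)=\big(f(t,u(t))-f(t,y(t))\big)+\delta(t)=O(|e(t)|)+O(h^{s}).
\]
Integrating twice from $t_n$ and writing $M:=\sup|e|$, $M':=\sup|e'|$ over the step gives $M'\le Lh^2M'+O(h^{s+1})$ and $M\le hM'$, so for small $h$ the self-referential Lipschitz term is absorbed and
\[
\sup_{[t_n,t_n+h]}|e'|=O(h^{s+1}),\qquad \sup_{[t_n,t_n+h]}|e|=O(h^{s+2}).
\]
Evaluating at $t_n+h$ and at the collocation nodes then yields $p_1=s+1$ from \eqref{defnorderc}, $p_2=s$ from \eqref{defnordera}, and $p_3=s+1$ from \eqref{defnorderb}; hence $p=\min\{p_1,p_2\}=s$ and $r=\min\{p_1,p_2,p_3\}=s$. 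The velocity estimate is the binding one: the position gains one extra power from the double integration, and the claimed order $s$ is sharp for generic $(c_i)_{i=1}^s$ because the leading term of the velocity error does not vanish unless orthogonality (superconvergence) conditions hold.

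The delicate point, and the main obstacle, is the uniform-in-$h$ control of the defect and its derivatives as the nodes $t_n+c_ih$ coalesce when $h\to0$. The matrix $\bm{F}(t_n,h)$ then becomes ill-conditioned, so the \emph{coefficients} of $u''$ in the basis $\{u_k''\}$ may blow up; nevertheless the \emph{function} $u''$ stays bounded with bounded derivatives because its generalized divided differences against the smooth nodal data remain $O(1)$. Establishing this intrinsically (rather than through the coefficients) is exactly what Remark~\ref{smooth1} together with the Wronskian expansion provide, and it is what legitimizes the simple Gronwall bootstrap above.
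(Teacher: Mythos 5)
Your proposal is correct and reaches the same exponents as the paper ($p_1=p_3=s+1$, $p_2=s$, hence $p=r=s$), and the first half is essentially identical: both arguments pass through the collocation solution of Theorem~\ref{theorem3.10}, observe that the defect $u''-f(\cdot,u)$ vanishes at the nodes, and factor out $\prod_{i=1}^s(t-t_n-c_ih)$ via Remark~\ref{smooth1} to get an $O(h^s)$ defect. Where you genuinely diverge is in propagating this defect into the error $e=u-y$: you integrate $e''=O(|e|)+O(h^s)$ twice and absorb the Lipschitz term by a Gronwall-type bootstrap ($M'\le Lh^2M'+O(h^{s+1})$, $M\le hM'$), whereas the paper linearizes $f(t,u)-f(t,y)=L(t)R(t)$ with the secant slope \eqref{supperconvergence2}, rewrites the error equation as the first-order system \eqref{eqm10.6}, and applies the variation-of-constants formula \eqref{eqRode} with the fundamental matrix $\bm{X}(t)$ to obtain the exact integral representation \eqref{eq3.16}. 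Your route is more elementary and entirely sufficient for this theorem; what the paper's heavier representation buys is the explicit form $R'(h)=h^{s+1}\int_0^1[\cdots]\,g(\xi h)\prod_{i=1}^s(\xi-c_i)\,d\xi$, whose leading Taylor coefficients are multiples of $\int_0^1\xi^j\prod_i(\xi-c_i)\,d\xi$ --- exactly the structure exploited in Theorem~\ref{theorem3.4} to get superconvergence under the orthogonality condition \eqref{supper}, a cancellation your Gronwall bound cannot detect. You are also right to flag the uniform-in-$h$ boundedness of the scaled defect $\zeta$ (the paper's $g$) as the delicate point; the paper treats it no more rigorously than you do, delegating it to Remark~\ref{smooth1} and the earlier reference.
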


\begin{proof}
Let us revisit Ozawa's result~\cite[Theorem~2]{O2} using our collocation framework.
Assume without loss of generality that $t_0=0$. 
Let $u(t)$ be the collocation solution corresponding to an $s$-stage FRKN method 
$(\bm{c}, \bm{A}, \bm{b}, \bm{d})$. 
Thus, $u(t)$ satisfies \eqref{evenpoly} and the error function $u''(t)-f(t,u(t))$ 
is zero at $t=c_ih, i=1,\ldots,s$. Let
\begin{equation}
\label{eq3.1.0}
g(t):=\frac{u''(t)-f(t,u(t))}{\prod_{i=1}^s(t-c_ih)},\qquad t\not=c_ih,\quad i=1,\ldots,s.
\end{equation}
This function can be extended over the interval $[0,T]$ provided that $f$ is sufficiently smooth 
as mentioned in Remark~\ref{smooth1}. We can equivalently write equation \eqref{eq3.1.0} as
\begin{equation}
\label{eq3.1}
u''(t)-f(t,u(t))=g(t)\prod_{i=1}^s(t-c_ih).
\end{equation}
Let 
\begin{equation}
\label{eqError}
R(t):=u(t)-y(t),
\end{equation}
where $y(t)$ is the solution to problem \eqref{sys}, i.e., 
$$
y''(t)=f(t,y(t)),\qquad y(0)=y_0,\qquad y'(0)=y'_0.
$$  
This and equation \eqref{eq3.1} imply
\begin{align}
\label{supperconvergence1}
R''(t)= u''(t)-y''(t) = [f(t,u(t))-f(t,y(t))]+g(t)\prod_{i=1}^s(t-c_ih).
\end{align}
Define
\begin{align}
\label{supperconvergence2}
L(t)=\left\{
\begin{matrix}
\frac{f(t,u(t))-f(t,y(t))}{u(t)-y(t)} &\text{if}\quad u(t)\neq y(t),\\*[3mm]
\frac{\partial f}{\partial y}(t,y(t)) &\text{if}\quad u(t)= y(t).
\end{matrix}\right.
\end{align}
With $f(t,y)$ assumed Lipschitz continuous in $y$ there exists a constant $L>0$ such that
$$
 |L(t)| \leq L,\quad t\in [0,h].
$$
From \eqref{supperconvergence2} one gets
$$
f(t,u(t))-f(t,y(t)) =  L(t)[u(t)-y(t)] = L(t)R(t).
$$
This and \eqref{supperconvergence1} imply
$$
R''(t) = L(t)R(t) + g(t)\prod_{i=1}^s(t-c_ih),\quad R(0)=R'(0)=0.
$$
This equation can be written as
\begin{equation}
\label{eqm10.6}
\begin{pmatrix}
R(t)\\R'(t)
\end{pmatrix}' = 
\begin{pmatrix}
0&1\\ 
L(t)& 0
\end{pmatrix}\begin{pmatrix}
R(t)\\R'(t)
\end{pmatrix} + 
\begin{pmatrix}
0\\
g(t)\prod_{i=1}^s(t-c_ih)
\end{pmatrix}
,\quad 
\begin{pmatrix}
R(0)\\R'(0)
\end{pmatrix}=\bm{0}.
\end{equation}
From the theory of ordinary differential equations, the solution to equation~\eqref{eqm10.6} 
is 
\begin{equation}
\label{eqRode}
\begin{pmatrix}
R(t)\\R'(t)
\end{pmatrix} = \bm{X}(t) \int_0^t \bm{X}^{-1}(\xi) \begin{pmatrix}
0\\
g(\xi)\prod_{i=1}^s(\xi -c_ih)
\end{pmatrix} \,d\xi,\qquad t\ge 0,
\end{equation}
where $\bm{X}(t)$ is the fundamental matrix solution to the homogeneous problem corresponding to equation~\eqref{eqm10.6}, i.e., $\bm{X}(t)$ solves the problem
\begin{equation*}
%\label{fe20e1}
\bm{X}'(t) = \begin{pmatrix}
0&1\\ 
L(t)& 0
\end{pmatrix}\bm{X}(t),\qquad \bm{X}(0) = 
\begin{pmatrix}
1 & 0\\
0 & 1
\end{pmatrix}. 
\end{equation*}
Let 
%$(a_{ij})_{i,j=1}^2$ and $(b_{ij})_{i,j=1}^2$ be the entries of $[N(x)]^{-1}$ and $N(x)$, i.e.,
\begin{equation}
\label{eq3.14}
\bm{X}(t) = \begin{pmatrix}a_{11}(t) & a_{12}(t)\\ a_{21}(t) & a_{22}(t)\end{pmatrix},\qquad
\bm{X}^{-1}(t) = \begin{pmatrix}b_{11}(t) & b_{12}(t)\\ b_{21}(t) & b_{22}(t)\end{pmatrix}.
\end{equation}
From \eqref{eqRode} and \eqref{eq3.14} one gets
\begin{align*}
R'(t)=& \int_0^t [a_{21}(t)b_{12}(\xi)+ a_{22}(t)b_{22}(\xi)]g(\xi)\prod_{i=1}^s(\xi-c_ih)\,d\xi.
\end{align*}
Therefore, 
\begin{equation}
\label{eq3.16}
R'(th) = h^{s+1}\int_0^t [a_{21}(th)b_{12}(\xi h)+ a_{22}(th)b_{22}(\xi h)]g(\xi h)\prod_{i=1}^s(\xi-c_i)\,d\xi.
\end{equation}
Recall~\eqref{eqError} and the collocation method~\eqref{colmethod}, equation~\eqref{eq3.16} 
implies that
\begin{equation}
\label{eqEp}
y'_{1}-y'(h)=R'(h) = O(h^{s+1}),
\end{equation}

From the Fundamental Theorem of Calculus and equation \eqref{eq3.16} one gets
\begin{align}
\label{eq3.15}
R(\omega h) &= R(0) + \int_0^\omega R'(th) h\,dt \\
      &= h^{s+2}\int_0^\omega\int_0^t [a_{21}(th)b_{12}(\xi h)+ a_{22}(th)b_{22}(\xi h)]g(\xi h)\prod_{i=1}^s(\xi-c_i)\,d\xi dt. 
      \nonumber
\end{align}
Letting $\omega = c_i$ and $\omega = 1$ in equation \eqref{eq3.15} one obtains the following relations
\begin{align}
Y_i-y(c_ih)&=u(c_ih)-y(c_ih)=R(c_ih)=O(h^{s+2}),\quad i=1,...,s, \label{eqEstage}\\
y_{1}-y(h)&=u(h)-y(h)=R(h)=O(h^{s+2}). \label{eqE}
\end{align}

It follows from Definition \ref{defnorder} and equations \eqref{eqEp}, \eqref{eqE}, and \eqref{eqEstage} 
that both the accuracy order and stage order are equal to $s$. 
Theorem~\ref{thm3.2} is proved.
%because
%the order of the FRKN method is defined to be 
%the minimum order  from \eqref{eqE} and \eqref{eqEp}, 
%which is $s$, while we conclude from \eqref{eqEstage} that the stage order is $s+1$. 
\end{proof}

\begin{rem}\rm 
It follows from equations \eqref{eqEp}, \eqref{eqE}, and \eqref{eqEstage} that FRKN methods yield 
$y_{n+1}$ and $\bm{Y}_n$ with a local accuracy order of $s+2$ and yield $y'_{n+1}$ with a local accuracy order of $s+1$.  
This raises the question whether the global accuracy order can be improved to $s+1$ instead of $s$ if one can obtain $y'_{n+1}$ with a local accuracy order of $s+2$. The answer is yes. Indeed, in our numerical experiments we have been able to increase the global accuracy order to $s+1$ for any $(c_i)_{i=1}^s$ by computing $y'_{n+1}$ at a local accuracy order of $s+2$. This can be done by replacing equation \eqref{eq2.2} by the equation
\begin{equation}
\label{jqx1}
y'_{n+1} = y'_n + h\tilde{d}_0f(t_n,y_n) + h\tilde{\bm{d}}^Tf(\bm{e}t_n+\bm{c}h,\bm{Y}_n), 
\end{equation}
where the coefficients $\tilde{d}_0$ and $\tilde{\bm{d}}^T$ are computed so that the local accuracy order of formula \eqref{jqx1} is $s+2$. We will see below that an alternative way to obtain this outcome is to choose  $(c_i)_{i=1}^s$  to satisfy the orthogonality condition in Theorem~\ref{theorem3.4} with $q=1$.
\end{rem}

\begin{rem}\rm \label{smooth2}Taking a Taylor expansion of $f(t,u(t))$ with respect to $u(t)$ at $y(t)$, we obtain
$$
L(t)=\sum_{k=1}^n\frac{1}{k!}\frac{\partial^k f}{\partial y^k}\big(t,y(t)\big)\big[u(t)-y(t)\big]^{k-1}+O\big([u(t)-y(t)]^{n}\big).
$$
This implies that $L(t)$ is quite smooth if $f(t,y)$ is sufficiently smooth. 
This leads to the existence of a Taylor expansion of $a_{ij}(t)$ and $b_{ij}(t)$, the entries of 
$\bm{X}(t)$ and $\bm{X}^{-1}(t)$. If $f(t,y)$ is sufficiently smooth, then so is the function $g$ defined in \eqref{eq3.1.0}. 
Thus, one has the following Taylor expansions
\begin{align}
\beta_{ij}(t):=b_{ij}(t)g(t)= & \beta_{ij}^{(0)}+\beta_{ij}^{(1)}t+\cdots+\beta_{ij}^{(s)}t^s+O(t^{s+1}),\qquad i,j=1,2. \label{eq3.18}
\end{align}
\end{rem}

\subsection{Superconvergence}

\begin{theorem}
\label{theorem3.4}
An $s$-stage FRKN method is of order $s+q$ if
the collocation parameters $(c_i)_{i = 1}^s$  satisfy
\begin{align}
\label{supper}
\int_{0}^1 \xi^j\prod_{i=1}^s(\xi-c_i)d\xi = 0,\quad j = 0, ..., q-1.
\end{align}
\end{theorem}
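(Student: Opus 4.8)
The plan is to sharpen the two error estimates obtained in the proof of Theorem~\ref{thm3.2}, namely the one for $R'(h)$ in \eqref{eq3.16} and the one for $R(h)$ in \eqref{eq3.15}, by exploiting the orthogonality relations \eqref{supper} to annihilate the leading terms of their integral representations. Since Definition~\ref{defnorder} sets the order to $p=\min\{p_1,p_2\}$, where $p_1$ and $p_2$ are governed by $R(h)=u(h)-y(h)$ and $R'(h)=u'(h)-y'(h)$ respectively, it suffices to show that both $R(h)=O(h^{s+q+1})$ and $R'(h)=O(h^{s+q+1})$ under \eqref{supper}; this upgrades $p_1$ and $p_2$ from their values $s+1$ and $s$ in Theorem~\ref{thm3.2} to $p_1=p_2=s+q$, giving $p=s+q$. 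Throughout I assume $q\le s$ (the largest value admissible for $s$ collocation parameters) and that $f$ is smooth enough for the Taylor expansions of Remark~\ref{smooth2} to hold.

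First I would treat $R'(h)$, which is the easier of the two. Setting $t=1$ in \eqref{eq3.16} and writing $\beta_{2j}(\xi h)=b_{2j}(\xi h)g(\xi h)$, I pull the $t$-dependent factors $a_{21}(h)$ and $a_{22}(h)$ outside the integral, leaving a sum of integrals of the form $\int_0^1\beta_{2j}(\xi h)\prod_{i=1}^s(\xi-c_i)\,d\xi$. Substituting the expansion \eqref{eq3.18}, each such integral becomes $\sum_{k=0}^s\beta_{2j}^{(k)}h^k\int_0^1\xi^k\prod_{i=1}^s(\xi-c_i)\,d\xi + O(h^{s+1})$. By the orthogonality condition \eqref{supper} every moment with $k\le q-1$ vanishes, so the integral is $O(h^q)$; since $a_{2j}(h)=O(1)$ this yields $R'(h)=h^{s+1}\cdot O(h^q)=O(h^{s+q+1})$.

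The main work, and the main obstacle, is the estimate for $R(h)$, because the representation \eqref{eq3.15} with $\omega=1$ is a double integral in which the factors $a_{2j}(th)$ now depend on the outer variable $t$, so the moments \eqref{supper} cannot be applied directly. My plan is to interchange the order of integration over the triangle $0\le\xi\le t\le 1$, turning the expression into $h^{s+2}\int_0^1\big(\int_\xi^1[a_{21}(th)\beta_{12}(\xi h)+a_{22}(th)\beta_{22}(\xi h)]\,dt\big)\prod_{i=1}^s(\xi-c_i)\,d\xi$. Expanding $a_{2j}(th)=\sum_l\alpha_{2j}^{(l)}(th)^l$ and using $\int_\xi^1 t^l\,dt=(1-\xi^{l+1})/(l+1)$, together with \eqref{eq3.18}, reduces the inner double sum to a combination of the two moment integrals $\int_0^1\xi^k\prod_{i=1}^s(\xi-c_i)\,d\xi$ and $\int_0^1\xi^{k+l+1}\prod_{i=1}^s(\xi-c_i)\,d\xi$, each carrying a factor $h^{l+k}$. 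A short bookkeeping argument then shows that the coefficient of $h^{l+k}$ vanishes whenever $l+k\le q-2$ (both moments are killed by \eqref{supper}) and that the lowest surviving power is $h^{q-1}$; consequently the bracketed integral is $O(h^{q-1})$ and $R(h)=h^{s+2}\cdot O(h^{q-1})=O(h^{s+q+1})$.

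Finally, combining $R(h)=O(h^{s+q+1})$ and $R'(h)=O(h^{s+q+1})$ with $y_1-y(h)=R(h)$ and $y_1'-y'(h)=R'(h)$ (cf.\ \eqref{eqEp} and \eqref{eqE}), Definition~\ref{defnorder} gives $p_1=p_2=s+q$ and hence the global order $p=s+q$, proving the theorem. The delicate point to watch in the $R(h)$ computation is that the orthogonality is only guaranteed up to exponent $q-1$, so the two moment integrals must be tracked jointly: it is precisely the cancellation of both $\int_0^1\xi^k\prod_{i=1}^s(\xi-c_i)\,d\xi$ and $\int_0^1\xi^{k+l+1}\prod_{i=1}^s(\xi-c_i)\,d\xi$ for small $l+k$ that produces the extra powers of $h$, whereas either moment alone would not suffice.
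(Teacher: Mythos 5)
Your proof is correct and follows essentially the same route as the paper: both estimates start from the integral representations of $R'(h)$ and $R(h)$ obtained in the proof of Theorem~\ref{thm3.2}, apply Fubini's theorem to the double integral for $R(h)$, Taylor-expand the smooth factors, and invoke the orthogonality conditions \eqref{supper} to annihilate the low-order moments. The only cosmetic difference is that the paper performs the inner $t$-integration via the antiderivatives $\alpha_i(t)=\int_0^t a_{1i}(\xi)\,d\xi$ and then expands the products $\alpha_i(\xi h)\beta_{i2}(\xi h)$, whereas you expand the outer factor termwise and track the two moment integrals jointly; the resulting bookkeeping and the final count $h^{s+2}\cdot O(h^{q-1})=O(h^{s+q+1})$ agree with the paper's $h^{s+1}\cdot O(h^{q})$.
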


\begin{proof}
Let us give an alternative proof for Ozawa's result~\cite[Theorem~3]{O2} using our collocation framework.
From \eqref{eq3.16} and \eqref{eq3.18} one gets
\begin{equation}
\label{eq3.22}
\begin{split}
R'(h) &= h^{s+1}a_{21}(h)\int_0^1 b_{12}(\xi h)g(\xi h)\prod_{i=1}^s(\xi-c_i)\,d\xi\\
&\quad + h^{s+1}a_{22}(h)\int_0^1 b_{22}(\xi h)g(\xi h)\prod_{i=1}^s(\xi-c_i)\,d\xi\\
&=  h^{s+1}a_{21}(h)\sum_{i=0}^{s-1} h^i\beta_{12}^{(i)}\int_0^1  \xi^i\prod_{i=1}^s(\xi-c_i)d\xi\\
&\quad +  h^{s+1}a_{22}(h)\sum_{i=0}^{s-1} h^i \beta_{22}^{(i)}\int_0^1  \xi^i\prod_{i=1}^s(\xi-c_i)d\xi +O(h^{2s+1}).  
\end{split}
\end{equation}
It follows from \eqref{supper} and \eqref{eq3.22} that
\begin{equation}
\label{wr4}
y'_1 - y'(h) = R'(h) = O(h^{s+q+1}). 
\end{equation}

From \eqref{eq3.15} and Fubini's Theorem we have
\begin{equation}
\label{wr1}
\begin{split}
R(h) &= h^{s+2}\int_0^1 \int_0^t \bigg[a_{11}(th)\beta_{12}(\xi h)+ a_{12}(th)\beta_{22}(\xi h)\bigg]\prod_{i=1}^s(\xi-c_i)d\xi dt\\
&= h^{s+2}\int_0^1 \int_\xi^1 \bigg[a_{11}(th)\beta_{12}(\xi h) + a_{12}(th)\beta_{22}(\xi h)\bigg]\prod_{i=1}^s(\xi-c_i)\,dt d\xi\\
&= h^{s+2}\int_0^1 \beta_{12}(\xi h)\prod_{i=1}^s(\xi-c_i)\int_\xi^1 a_{11}(th)\,dt \,d\xi\\
&\quad + h^{s+2}\int_0^1 \beta_{22}(\xi h)\prod_{i=1}^s(\xi-c_i)\int_\xi^1 a_{12}(th)\,dt \,d\xi.
\end{split}
\end{equation}
We have
$$
h\int_\xi^1 a_{1i}(th)\,dt = \alpha_i(h) - \alpha_i(\xi h),\qquad \alpha_i(t) = \int_0^t a_{1i}(\xi)\, d\xi,\qquad i=1,2.
$$
This and equation \eqref{wr1} imply
\begin{equation}
\label{f23e1}
\begin{split}
R(h)&= h^{s+1}\int_0^1 [\alpha_{1}(h) - \alpha_1(\xi h)]\beta_{12}(\xi h)\prod_{i=1}^s(\xi-c_i)\,dt d\xi\\
&\quad+h^{s+1}\int_0^1  [\alpha_{2}(h) - \alpha_2(\xi h)]\beta_{22}(\xi h) \prod_{i=1}^s(\xi-c_i)\,dt d\xi.
\end{split}
\end{equation}
Let
\begin{equation}
\label{f23e2}
\alpha_i(t)\beta_{i2}(t) = \gamma_{i}^{(0)} + \gamma_{i}^{(1)} t + \cdots + \gamma_{i}^{(s)} t^s + O(t^{s+1}),\qquad i = 1,2.  
\end{equation}
Using equation \eqref{f23e1} and the Taylor expansions in \eqref{eq3.18} and \eqref{f23e2},  one gets
\begin{align}
\label{eq3.21}
R(h) &= h^{s+1} \sum_{i=0}^{s-1} h^i [\alpha_1(h)\beta_{12}^{(i)}  - \gamma_{1}^{(i)} ]\int_0^1  \xi^i\prod_{i=1}^s(\xi-c_i)d\xi  \\
&\quad+  h^{s+1} \sum_{i=0}^{s-1} h^i [\alpha_2(h)\beta_{22}^{(i)}  - \gamma_{2}^{(i)} ]\int_0^1  \xi^i\prod_{i=1}^s(\xi-c_i)d\xi  +O(h^{2s+1}). \nonumber
\end{align}
This and relation \eqref{supper} imply
\begin{equation}
\label{eqwe3}
y_1 - y(h) = R(h) = O(h^{s+q+1}),
\end{equation}
which proves Theorem~\ref{theorem3.4}.
\end{proof}

In particular, all $s$-stage FRKN methods based on Gauss points attain the maximum order of accuracy of $2s$
given that Gauss points satisfy the orthogonality condition~\eqref{supper} with $q=s$.
%Also note that the condition \eqref{supper} is satisfied with $q=1$ if collocation parameters $c_i$ are symmetric %around $\frac{1}{2}$. Thus, every $s$-stage FRKN methods based on symmetric collocation points are of order $s+1$.

%===== iteration up to here (will revisit again after the new proof) ====

\begin{rem}{\rm One can verify that the relation
\begin{equation}
\label{wr2}
\alpha_i(h)\beta_{i2}^{(k)} - \gamma_i^{(k)} = O(h),\qquad i=1,2,
\end{equation}
holds for $k=0$. 
Thus, from equation \eqref{eq3.21} we still have $R(h) = O(h^{s+2})$ for any set of collocation parameters $(c_i)_{i=1}^s$. 
From \eqref{eqEp} and \eqref{eqE} one may ask whether the local order of accuracy  of $y_{n+1}$ is still one unit higher than the local order of accuracy of $y'_{n+1}$ under the orthogonality condition \eqref{supper}. The answer is negative because equation \eqref{wr2} does not hold for $k\ge 1$, in general. 

Equation \eqref{eqwe3} can also be obtained in a much shorter way as it is done for \eqref{wr4} by using the following equation 
\begin{align*}
R(t)=& \int_0^t [a_{11}(t)b_{12}(\xi)+ a_{12}(t)b_{22}(\xi)]g(\xi)\prod_{i=1}^s(\xi-c_ih)\,d\xi,
\end{align*}
which follows from \eqref{eqRode} and \eqref{eq3.14}. However, we chose the current approach in order to show that the local accuracy of $y_{n+1}$  cannot exceed the one of $y'_{n+1}$ under the orthogonality condition~\eqref{supper}. 
}
\end{rem}

\section{Linear stability}
\label{sec:Stability}

In~\cite{O2}, Ozawa only studied the order of accuracy of FRKN methods.
Here we present new results related to their stability that have not been reported so far in the literature.
We apply an $s$-stage FRKN method to the test problem
\begin{equation}
\label{eq4.1}
y'' = \lambda y,\quad y'(0) = y'_0,\quad y(0) = y_0,
\end{equation}
to get
\begin{equation}
\begin{pmatrix}
y_{n+1}\\hy'_{n+1} 
\end{pmatrix}
= M(z,h)
\begin{pmatrix}
y_{n}\\hy'_{n} 
\end{pmatrix},\qquad z := \lambda h^2,\qquad n=0,1,\ldots,
\end{equation}
where the matrix $M(z,h)$ is called the stability matrix of the corresponding FRKN method. 

%{\bf[Nguyen: \cite{cong91} use $(y_{n}, hy_{n}')^T$ and their $z$ is $z := \lambda h^2$ in order to
%get the first $M$ in \eqref{eq4.3}]}

Let us recall the following definitions (van der Houwen, Sommeijer and Cong~\cite{cong91}).
\begin{defn} For a given $h$,
the collection of points on the negative real $z$-axis is called 
\begin{itemize}
\item{the stability region if in this region $R_h(z) = \rho(M(z,h))<1$},
\item{the periodicity region if in this region $R_h(z)=1$ and $[\trace M(z,h)]^2 - 4 \det M(z,h) < 0$}.
\end{itemize}
\end{defn}

\begin{theorem}
\label{theorem4.1}
For a separable $s$-stage FRKN method characterized by the
set of basis functions $\bm{u}(t)=(1,t,u_1(t), \ldots, u_s(t))^T=e^{\bm{S}t}\bm{u}_0$,
the stability matrix is
\begin{equation}
\label{eq4.3}
\begin{split}
M(z,h) & =
\begin{pmatrix}
1 + z\bm{b}^T(h)[I- z\bm{A}(h)]^{-1}\bm{e} & 1 + z\bm{b}^T(h)[I-z\bm{A}(h)]^{-1}\bm{c}\\
z\bm{d}^T(h)[I- z\bm{A}(h)]^{-1}\bm{e} & 1 + z\bm{d}^T(h)[I-z\bm{A}]^{-1}\bm{c}
\end{pmatrix}\\
& = \begin{pmatrix}
\bm{e}_1^T \bm{W}^{-1}e^{\bm{S}h}\bm{u}_0 & h^{-1}\bm{e}_2^T \bm{W}^{-1} e^{\bm{S}h}\bm{u}_0\\
h\bm{e}_1^T \bm{W}^{-1} \bm{S}e^{\bm{S}h}\bm{u}_0 & \bm{e}_2^T \bm{W}^{-1} \bm{S}e^{\bm{S}h}\bm{u}_0
\end{pmatrix},
\end{split}
\end{equation}
%Here $\bm{\mu}$ is the vector determined by the equation
%\begin{gather}
%\bm{\mu}^T \bm{R} = \begin{pmatrix}
%y_0 &y_0' &0 & \cdots & 0 
%\end{pmatrix},
%\end{gather}
where $\bm{e}_1$ and $\bm{e}_2$ denote the first and second columns
of the identity matrix of dimension $s+2$,  
and 
%$\bm{w}_1^T$ and $\bm{w}_2^T$ are the first two rows of $\bm{W}^{-1}$ where
%\begin{equation}
%\label{eqR}
%\bm{R} =
%\begin{bmatrix}
%(I - \lambda \bm{S}^2)^{-1}\bm{u}_0 &(I - \lambda\bm{S}^2)^{-1}\bm{S}\bm{u}_0& e^{\bm{S}c_1h}\bm{u}_0&\cdots 
%&  e^{\bm{S}c_sh}\bm{u}_0
%\end{bmatrix}.
%\end{equation}
%
\begin{equation}
\label{eqR}
\begin{split}
\bm{W} &=
\begin{bmatrix}
\bm{u}_0 &\bm{S}\bm{u}_0& [(h\bm{S})^2 - z I]e^{\bm{S}c_1h}\bm{u}_0&\cdots 
&  [(h\bm{S})^2 - z I]e^{\bm{S}c_sh}\bm{u}_0
\end{bmatrix}
\end{split}
\end{equation}

\end{theorem}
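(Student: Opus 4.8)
The plan is to establish the two expressions in \eqref{eq4.3} separately and observe that both describe the same one-step map. The first expression follows by direct substitution. Applying the separable $s$-stage FRKN method to \eqref{eq4.1} with $f(t,y)=\lambda y$ and $t_n=0$ (permissible since the coefficients are $t$-independent by Theorem~\ref{indepentcoeff}), the stage equation reads $\bm{Y}_n=\bm{e}y_n+\bm{c}hy'_n+z\bm{A}(h)\bm{Y}_n$, so $\bm{Y}_n=[I-z\bm{A}(h)]^{-1}(\bm{e}y_n+\bm{c}hy'_n)$ whenever $I-z\bm{A}(h)$ is nonsingular. Feeding this into \eqref{eq2.1} and \eqref{eq2.2}, multiplying the $y'_{n+1}$ relation by $h$, and grouping the coefficients of $y_n$ and $hy'_n$ reproduces the first matrix with no further work.

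For the second expression I would exploit the collocation solution. By Theorem~\ref{theorem3.10} and the remark following it, the outputs coincide with the collocation solution, $y_{n+1}=u(h)$ and $y'_{n+1}=u'(h)$, where $u\in\bm{H}$. Writing $u(t)=\bm{a}^T\bm{u}(t)$ with $\bm{u}(t)=e^{\bm{S}t}\bm{u}_0$ (Theorem~\ref{thm-Smatrix}), the initial conditions give $\bm{a}^T\bm{u}_0=y_n$ and $\bm{a}^T\bm{S}\bm{u}_0=y'_n$, while the collocation conditions $u''(c_ih)=\lambda u(c_ih)$ become $\bm{a}^T(\bm{S}^2-\lambda I)e^{\bm{S}c_ih}\bm{u}_0=0$; multiplying by $h^2$ and using $z=\lambda h^2$ turns these into $\bm{a}^T[(h\bm{S})^2-zI]e^{\bm{S}c_ih}\bm{u}_0=0$. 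Stacking these $s+2$ scalar equations is precisely $\bm{W}^T\bm{a}=y_n\bm{e}_1+y'_n\bm{e}_2$, with $\bm{W}$ as in \eqref{eqR}.

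Solving gives $\bm{a}^T=(y_n\bm{e}_1+y'_n\bm{e}_2)^T\bm{W}^{-1}$, and substituting into $y_{n+1}=\bm{a}^Te^{\bm{S}h}\bm{u}_0$ and $y'_{n+1}=\bm{a}^T\bm{S}e^{\bm{S}h}\bm{u}_0$ yields the entries of the second matrix; the off-diagonal factors $h^{-1}$ and $h$ arise exactly from passing between $y'_n$ and $hy'_n$ when one groups the result by $y_n$ and $hy'_n$. This matches \eqref{eq4.3} line by line.

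The main obstacle is the nonsingularity of $\bm{W}$, needed both to solve for $\bm{a}$ and to give meaning to $\bm{W}^{-1}$. I would obtain it from uniqueness of the collocation solution: any $\bm{a}$ in the kernel of $\bm{W}^T$ produces a $u=\bm{a}^T\bm{u}(t)$ that solves the homogeneous test problem with $u(0)=u'(0)=0$ and satisfies the collocation conditions, hence $u\equiv 0$ by the uniqueness guaranteed (for the applicable method, i.e.\ $I-z\bm{A}(h)$ nonsingular) through Theorem~\ref{theorem3.10}; the linear independence of $\{1,t,u_1,\ldots,u_s\}$ then forces $\bm{a}=0$, so $\bm{W}^T$ is injective. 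This also ties the two representations together, since each is merely the matrix of the same unique one-step map.
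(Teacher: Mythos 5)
Your proposal is correct and follows essentially the same route as the paper: the first form of $M(z,h)$ by direct substitution into the stage and update equations (the paper simply cites van der Houwen et al.\ for this), and the second form by writing the collocation solution of the test equation as $\bm{\mu}^T e^{\bm{S}t}\bm{u}_0$, imposing the initial and collocation conditions to get the linear system with matrix $\bm{W}$, and evaluating $u(h)$, $u'(h)$. Your added argument for the nonsingularity of $\bm{W}$ via uniqueness of the collocation solution is a small bonus the paper leaves implicit.
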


\begin{proof}
Applying an $s$-stage FRKN method to the test 
equation \eqref{eq4.1} one gets the first equality in \eqref{eq4.3} (see, e.g.,~\cite{cong91}),
and this gives the stability matrix in terms of the coefficients of the FRKN method. 
To derive the second formula
that gives the stability matrix in terms of the matrix $\bm{S}$ that characterizes 
the basis functions, let $u(t)$ be the collocation solution to~\eqref{eq4.1}. 
Since it is a linear combination of the component functions of $\bm{u}(t)$, there exists 
$\bm{\mu}\in\mathbb{R}^{s+2}$ such that
\begin{equation}
\label{wr5}
u(t) = \bm{\mu}^T \bm{u}(t) = \bm{\mu}^T e^{\bm{S}t}\bm{u}_0.
\end{equation}
This implies 
\begin{equation}
\label{eq4.4}
P(t):=u'' - \lambda u =\bm{\mu}^T (\bm{S}^2 - \lambda I) e^{\bm{S}t}\bm{u}_0.
\end{equation}
Now $P(t)$ 
satisfies the collocation conditions $P(c_ih) = 0$, $i=1,...,s$. These identities together
with equation \eqref{eq4.4} and the initial conditions $u(0)=y_0$, $u'(0) = y'_0$ imply 
\begin{gather}
\bm{\mu}^T \bm{u}_0 = y_0, \quad
\bm{\mu}^T \bm{S}\bm{u}_0 = y'_0, \label{eqm4.5}\\
h^2P(c_ih) = \bm{\mu}^T [(h\bm{S})^2 - z I] e^{\bm{S}hc_i}\bm{u}_0 = 0,\quad i=1,\ldots,s,\qquad z:=h^2\lambda.
\label{eqm4.6}
\end{gather}
Equations \eqref{eqm4.5} and \eqref{eqm4.6} can be written together in matrix form as
\begin{equation*}
\bm{\mu}^T \bm{W} = 
\begin{pmatrix}
y_0   &y'_0   &0&\cdots &0
\end{pmatrix},
\end{equation*}
where $\bm{W}$ is defined as indicated in \eqref{eqR}.
%\begin{equation*}
%\bm{R} =
%\begin{bmatrix}
%(I - \lambda \bm{S}^2)^{-1}\bm{u}_0 &(I - \lambda\bm{S}^2)^{-1}\bm{S}\bm{u}_0& e^{\bm{S}c_1}\bm{u}_0&\cdots 
%&  e^{\bm{S}c_s}\bm{u}_0
%\end{bmatrix}.
%\end{equation*}
%Let $\bm{w}_1^T$ and $\bm{w}_2^T$ be the first two rows of $\bm{R}^{-1}$. 
Consequently, we get
\begin{equation*}
\bm{\mu}^T = 
\begin{pmatrix}
y_0   &y'_0   &0&\cdots &0
\end{pmatrix} \bm{W}^{-1} = y_0\bm{e}_1^T \bm{W}^{-1} + y'_0 \bm{e}_2^T \bm{W}^{-1}.
\end{equation*}
This and equation \eqref{wr5} imply
\begin{equation*}
u(t) = y_0 \bm{e}_1^T \bm{W}^{-1} e^{\bm{S}t}\bm{u}_0 + 
y'_0 \bm{e}_2^T \bm{W}^{-1} e^{\bm{S}t}\bm{u}_0.
\end{equation*}
Thus,
\begin{equation*}
u'(t) = y_0 \bm{e}_1^T \bm{W}^{-1} \bm{S}e^{\bm{S}t}\bm{u}_0 + 
y'_0 \bm{e}_2^T \bm{W}^{-1} \bm{S}e^{\bm{S}t}\bm{u}_0.
\end{equation*}
Therefore,
\begin{equation*}
\begin{pmatrix}
y_1 \\ hy'_1
\end{pmatrix}
= 
\begin{pmatrix}
u(h) \\ hu'(h)
\end{pmatrix}
= 
\begin{pmatrix}
\bm{e}_1^T \bm{W}^{-1} e^{\bm{S}h}\bm{u}_0 & h^{-1}\bm{e}_2^T \bm{W}^{-1} e^{\bm{S}h}\bm{u}_0\\
h\bm{e}_1^T \bm{W}^{-1} \bm{S}e^{\bm{S}h}\bm{u}_0 & \bm{e}_2^T \bm{W}^{-1} \bm{S}e^{\bm{S}h}\bm{u}_0
\end{pmatrix}
\begin{pmatrix}
y_0 \\ hy'_0
\end{pmatrix},
\end{equation*}
which implies the second inequality in \eqref{eq4.3}. 
%Theorem~\ref{theorem4.1} is proved. 
\end{proof}

\section{An example}
\label{sec:Examples}

Consider the usual two-body gravitational problem with eccentricity $0 \le e < 1$,
\begin{equation}
\label{eq6.1}
\begin{split}
y_1'' &= - \frac{y_1}{(y_1 + y_2)^\frac{3}{2}},\quad y_1(0) = 1 - e,\quad y_1'(0) = 0,\\
y_2'' &= - \frac{y_2}{(y_1 + y_2)^\frac{3}{2}},\quad y_2(0) = 0,\quad y_2'(0) = \sqrt{\frac{1+e}{1-e}}.
\end{split}
\end{equation}

The solution to this problem is known to be
\begin{equation}
\label{eq6.2}
y_1(t) = \cos(u) - e,\qquad y_2(t) = \sqrt{1 - e^2} \sin(u),
\end{equation}
where $u$ is the solution of Keppler's equation $u = t + e\sin(u)$. 
We will use $ [0,20]$ as the integration domain of equation \eqref{eq6.1} in the numerical experiments. 

\subsection{Derivation of a 2-stage FRKN method}
\label{sec6.1}

We develop a 2-stage FRKN method to solve equation \eqref{eq6.1}
using the basis of functions $\{\cos(\omega t),\,\sin(\omega t)\}$
and Gauss points $(c_1,\,c_2) = 
\left(\frac{1}{2}-\frac{\sqrt{3}}{6},\,\frac{1}{2}+\frac{\sqrt{3}}{6}\right)$. 
The obtained method is denoted by FRKN2G and the corresponding classical collocation 2-stage method is denoted by RKN2G.  The suffix G is an indicator
that both methods use Gauss points.
From~\eqref{fFRKN} at $t=0$, the coefficients satisfy the system
\begin{subequations}
\begin{align*}
\sin(\bm{c}\nu)  &= \bm{c}\nu - \nu^2 \bm{A}(\nu)\sin(\bm{c}\nu),\qquad \nu = h\omega,\\
\cos(\bm{c}\nu) &= \bm{e} - \nu^2 \bm{A}(\nu)\cos(\bm{c}\nu),\\
\sin \nu &= \nu - \nu^2 \bm{b}^T(\nu) \sin(\bm{c}\nu),\\
\cos \nu &= 1 - \nu^2 \bm{b}^T(\nu) \cos(\bm{c}\nu),\\
\sin \nu &= \nu \bm{d}^T(\nu) \cos(\bm{c}\nu),\\
\cos \nu &= 1 - \nu \bm{d}^T(\nu) \sin(\bm{c}\nu).
\end{align*}
\end{subequations}
These equations can be written as follows
\begin{subequations}
\label{eq6.4}
\begin{align}
\begin{pmatrix}
\bm{c}\nu - \sin(\bm{c}\nu) & \bm{e} - \cos(\bm{c}\nu)
\end{pmatrix}
&= \nu^2 \bm{A}(\nu) \begin{pmatrix} \sin(\bm{c}\nu) & \cos(\bm{c}\nu)\end{pmatrix},\\
\begin{pmatrix}
\nu - \sin\nu  & 1 - \cos\nu
\end{pmatrix}
&= \nu^2 \bm{b}^T(\nu) \begin{pmatrix} \sin(\bm{c}\nu) & \cos(\bm{c}\nu)\end{pmatrix},\\
\begin{pmatrix}
 1 - \cos\nu & \sin\nu
\end{pmatrix}
&= \nu \bm{d}^T(\nu) \begin{pmatrix} \sin(\bm{c}\nu) & \cos(\bm{c}\nu)\end{pmatrix}. 
\end{align}
\end{subequations}
Solving these equations, one obtains the coefficients $\bm{A}(\nu), \bm{b}^T(\nu), \bm{d}^T(\nu)$
that are presented in the following Butcher tableau
$$
\renewcommand{\arraystretch}{1.2}
\begin{array}{c|cc}
c_1 &\displaystyle \frac{c_1\nu\cos(c_2\nu) - \sin(c_2\nu) -\sin((c_1-c_2)\nu)}{\nu^2\sin((c_1-c_2)\nu)}  &\displaystyle \frac{\sin(c_1\nu) - c_1\nu\cos(c_1\nu)}{\nu^2\sin((c_1-c_2)\nu)} \\ 
c_1 & \displaystyle\frac{c_2\nu\cos(c_2\nu) - \sin(c_2\nu)}{\nu^2\sin((c_1-c_2)\nu)} & \displaystyle \frac{\sin(c_1\nu)-c_2\nu\cos(c_1\nu) + \sin((c_2 - c_1)\nu)}{\nu^2\sin((c_1-c_2)\nu)} \\ 
\hline
      &\displaystyle \frac{\nu\cos(c_2\nu) - \sin(c_2\nu) -\sin((1-c_2)\nu)}{\nu^2\sin((1-c_2)\nu)}   & 
      \displaystyle \frac{\sin(c_1\nu)-\nu\cos(c_1\nu) + \sin((1 - c_1)\nu)}{\nu^2\sin((c_1-1)\nu)}\\
	&\displaystyle\frac{\cos(c_2\nu) - \cos((1-c_2)\nu)}{\nu\sin((c_1-c_2)\nu)} & \displaystyle\frac{\cos((1-c_1)\nu) - \cos(c_1\nu)}{\nu\sin((c_1-c_2)\nu)}
\end{array}
$$ 

We computed the coefficients more reliably  
by numerically solving the linear systems in equation~\eqref{eq6.4}
using Gaussian elimination with pivoting than using the 
closed-form representation in the tableau above (other authors
have successfully used Taylor expansions as well).
The trigonometric basis provides a reference of interest for our study.
It was used  in Ozawa~\cite{O2}, and it is often seen in the literature
as recently as in D'Ambrosio, Ferro, and Paternoster~\cite{D?Ambrosio}
who used it to construct a slightly different two-step method. 
Since the coefficients in the Butcher tableau above are functions of $\nu=\omega h$, the stability matrix $M(z,h)$ in equation 
\eqref{eq4.3} also depends on $\nu$. 
Using equation \eqref{eq4.3} we compute the spectral radius of the matrix $M(z,h)$ which we denote by $\rho(z)$, $z=\lambda h^2$, for various values of $\nu=\omega h$.

Figure \ref{figure1} plots the spectral radius of the stability matrix $M(z,h)$ of the FRKN2G method for various values of $\nu=\omega h$. 
\begin{figure}[!h!t!b!]
\centerline{
\begin{tabular}{c}
\mbox{\includegraphics[scale=0.85]{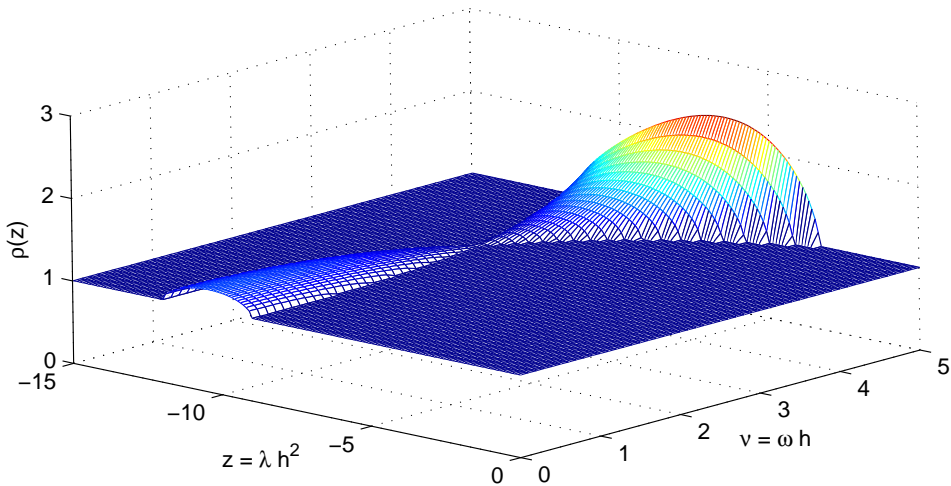}}
\end{tabular}
}
\caption{\it Spectral radius $\rho(z)$ of the stability matrix $M_\nu(z)$ of the FRKN2G method.
%{\bf [Nguyen: still not easy to read. Better to use the conventional flat plot as in the past.]}
}
\label{figure1}
\end{figure}

Figure \ref{figure2} plots the stability region of  the FRKN2G method in the $z$ and $\nu=\omega h$ plane. 
From Figure \ref{figure2} we can see that the stability region of  FRKN2G  increases as $\nu=\omega h$ increases from $0$ to $\pi$, and contains the interval $[-9,0]$ for all $\nu\in [0,\pi]$. The stability region decreases as $\nu$ increases from $\pi$ to 5.4. 
When $\nu$ is in the interval $[5.5,2\pi]$, FRKN2G  is unstable for any small $z$, or equivalently, for any small stepsize $h$. This suggests not to use  FRKN2G  with $\nu\in [5.5,2\pi]$. Moreover, when using  FRKN2G  one should restrict $\nu$ to be in the interval $[0,\pi]$ to have  large stability regions. It is known that the coefficients of  FRKN methods converge to those of the corresponding classical RKN methods. Thus, the limit of the stability region of  FRKN2G  as $\nu$ tends to zero is the same as the stability region of  RKN2G. From Figure \ref{figure2} one can see that the stability region of  FRKN2G  is slightly larger than that of  RKN2G  when $\nu\in [0,\pi]$. 
\begin{figure}[!h!t!b!]
\centerline{
\begin{tabular}{c}
%\mbox{\includegraphics[scale=0.76]{stab7a.eps}}
\mbox{\includegraphics[scale=0.75]{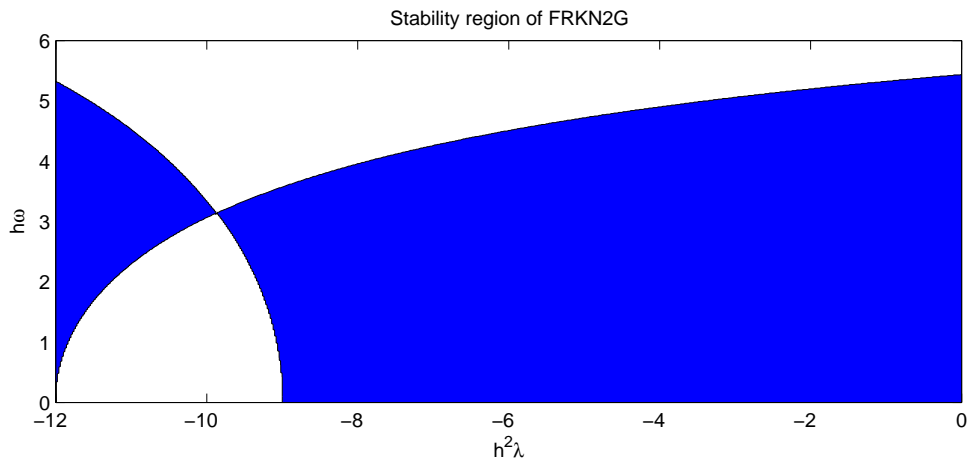}}
\end{tabular}
}
\caption{\it Plots of stability region (shaded) of the FRKN2G method for various value of $\nu=\omega h$.
}
\label{figure2}
\end{figure}

\subsection{Numerical experiments}

%\section{Conclusion}
%\label{sec:Conclusion}

In this section we implement the 2-stage FRKN2G method derived in section~\ref{sec6.1},
and then compare it with the RKN2G method. 

In the first experiment we solve equation \eqref{eq6.1} by FRKN2G and RKN2G  for $e=0.5$. 
Table~\ref{table1} shows the numerical results obtained by the two methods.
It is clear from equation \eqref{eq6.2} that if $e$ is large, then the solution of equation \eqref{eq6.1} is not well approximated by a linear combination of $\cos t$ and $\sin t$. Thus, for large values of $e$, we do not expect  FRKN2G  to be better than  RKN2G.  From Table \ref{table1} one can see that  RKN2G is slightly better than  FRKN2G. However, the difference between the two methods is insignificant. 
\begin{table}[ht] 
\caption{Errors $\triangle y_i = \log_{10}\displaystyle\max_{t_0\le t_n\le t_N}
|y_i(t_n)-y_{i,\text{comput}}(t_n)|$ over $N$ integration steps
for the two components $i=1,2$ when $e=0.5$.}
\label{table1}
\centering
\small
\renewcommand{\arraystretch}{1.2}
\begin{tabular}{@{}l@{}c|@{\hspace{2mm}}c@{\hspace{2mm}}c@{\hspace{2mm}}|c@{\hspace{2mm}}c@{\hspace{2mm}}|
c@{\hspace{2mm}}c@{\hspace{2mm}}|}
\cline{2-6}
&	&\multicolumn{2}{c|}{FRKN2G} & 
\multicolumn{2}{c|}{RKN2G} \\
&$h$	&$\triangle y_1$&$\triangle y_2$&$\triangle y_1$&$\triangle y_2$ \\
%&$\triangle y_1'$ &$\displaystyle 
%\log_{10}\sum_{n=1}^{N}h\triangle y_1'(t_n)$\\ 
\hline
    &$1/2^{~}$   &-0.1555   &-0.0703   &-0.0643   &-0.0009\\
    &$1/2^2$   &-1.4358   &-1.2576   &-1.4889   &-1.3038 \\
    &$1/2^3$   &-3.0069   &-2.7745   &-3.1459   &-2.8956 \\
    &$1/2^4$  &-4.1495   &-3.9321   &-4.2650   &-4.0354  \\
    &$1/2^5$   &-5.3323   &-5.1172   &-5.4399   &-5.2148 \\
    &$1/2^6$   &-6.5308   &-6.3167   &-6.6365   &-6.4128 \\
    &$1/2^7$   &-7.7340   &-7.5201   &-7.8388   &-7.6154 \\
    &$1/2^8$   &-8.9457   &-8.7315   &-9.0424   &-8.8192 \\
\hline
\end{tabular} 
\end{table}

Table \ref{table2} presents the numerical result when $e=0.01$. It follows from Table \ref{table2} that   FRKN2G is much better than the classical RKN2G method. This follows from the fact that when $e$ is small the solution to equation \eqref{eq6.1} can be well approximated by a linear combination of $\sin t$ and $\cos t$. Indeed, in this experiment,  RKN2G  requires a stepsize half of the one used by  FRKN2G to yield results of the same accuracy. 
Consequently, in this experiment,  RKN2G  takes twice as long as  FRKN2G for the same accuracy.
\begin{table}[ht] 
\caption{Errors $\triangle y_i = \log_{10}\displaystyle\max_{t_0\le t_n\le t_N}
|y_i(t_n)-y_{i,\text{comput}}(t_n)|$ over $N$ integration steps
for the two components $i=1,2$ when $e=0.01$.}
\label{table2}
\centering
\small
\renewcommand{\arraystretch}{1.2}
\begin{tabular}{@{}l@{}c|@{\hspace{2mm}}c@{\hspace{2mm}}c@{\hspace{2mm}}|c@{\hspace{2mm}}c@{\hspace{2mm}}|
c@{\hspace{2mm}}c@{\hspace{2mm}}|}
\cline{2-6}
&	&\multicolumn{2}{c|}{FRKN2G} & 
\multicolumn{2}{c|}{RKN2G} \\
&$h$	&$\triangle y_1$&$\triangle y_2$&$\triangle y_1$&$\triangle y_2$ \\
%&$\triangle y_1'$ &$\displaystyle 
%\log_{10}\sum_{n=1}^{N}h\triangle y_1'(t_n)$\\ 
\hline
    &$1/2$   &-4.0500   &-3.7300   &-2.3942   &-2.4200   \\
    &$1/2^2$   &-5.1726   &-4.8342   &-3.5973   &-3.5971  \\
    &$1/2^3$   &-6.3231   &-6.0228   &-4.8289   &-4.8213 \\
    &$1/2^4$   &-7.5164   &-7.2231   &-6.0429   &-6.0354 \\
    &$1/2^5$   &-8.7176   &-8.4263   &-7.2502   &-7.2426 \\
    &$1/2^6$   &-9.9273   &-9.6343   &-8.4551   &-8.4475 \\
    &$1/2^7$  &-11.5489  &-11.1156   &-9.6596   &-9.6519 \\
%    &$1/2^8$  &-11.0524  &-11.1008  &-10.8627  &-10.8546  &-10.8408   &-9.9991\\
\hline
\end{tabular} 
\end{table}  

We performed further experiments not reported here with smaller values of $e$ and observed that
the smaller $e$ is, the better FRKN2G  compared to RKN2G. 

We also carried out  experiments with a different 2-stage FRKN method constructed with coefficients found from equation 
\eqref{eq6.4} using $(c_1,c_2) = (0.2,1)$. This method is denoted by FRKN2
without the suffix G that was used earlier to indicate Gauss points. The corresponding 2-stage classical RKN method using 
$(c_1,c_2) = (0.2,1)$ is also simply denoted by RKN2.
To improve the accuracy for computing $y'_{n+1}$ we derived a further method using the formula
\begin{equation}
\label{jqx21}
y'_{n+1} = y'_n + h\tilde{d}_0(\nu)f(t_n,y_n) + h\tilde{\bm{d}}^T(\nu)f(\bm{e}t_n + \bm{c}h,\bm{Y}_n),
\end{equation}
where the coefficients $\tilde{d}_0(\nu)$ and $\tilde{\bm{d}}^T(\nu)$ are computed so that the formula \eqref{jqx21} has 
a local accuracy order of $s+2=4$. The resulting method with $y'_{n+1}$ computed by \eqref{jqx21} 
is denoted by FRKN2x. Concomitantly, we use RKN2x to denote the corresponding
classical method with $y'_{n+1}$ computed by a similar equation to \eqref{jqx21}. 

Table \ref{table3} presents the numerical results for FRKN2, FRKN2x, RKN2, and RKN2x,  for the two-body problem with 
$e=0.5$. It follows from the table that the accuracy orders for FRKN2 and RKN2 are both $p=2$ while the accuracy orders for FRKN2x and RKN2x are both $p=3$. This confirms our observation earlier that it is because of the accuracy order of $y'_{n+1}$ that the accuracy of the method cannot exceed $s$ for arbitrary $\bm{c}=(c_i)_{i=1}^s$. From this table one can see that the FRKN2 and FRKN2x methods are comparable to the corresponding classical RKN2 and RKN2x methods.  

\begin{table}[ht] 
\caption{Errors $\triangle y_i = \log_{10}\displaystyle\max_{t_0\le t_n\le t_N}
|y_i(t_n)-y_{i,\text{comput}}(t_n)|$ over $N$ integration steps
for the two components $i=1,2$ when $e=0.5$.}
\label{table3}
\centering
\small
\renewcommand{\arraystretch}{1.2}
\begin{tabular}{@{}l@{}c|@{\hspace{2mm}}c@{\hspace{2mm}}c@{\hspace{2mm}}|c@{\hspace{2mm}}c@{\hspace{2mm}}|
c@{\hspace{2mm}}c@{\hspace{2mm}}|c@{\hspace{2mm}}c@{\hspace{2mm}}|}
\cline{2-10}
&	&\multicolumn{2}{c|}{FRKN2} & 
\multicolumn{2}{c|}{FRKN2x} &\multicolumn{2}{c|}{RKN2} & 
\multicolumn{2}{c|}{RKN2x}\\
&$h$	&$\triangle y_1$&$\triangle y_2$&$\triangle y_1$&$\triangle y_2$ 
&$\triangle y_1$&$\triangle y_2$&$\triangle y_1$&$\triangle y_2$\\ 
\hline    
%    &$1/2^3$   &-0.0380    &0.0887    &-0.4192   &-0.2505   &-0.0193    &0.1023   &-0.3878   &-0.2231\\
    &$1/2^4$    &-0.6175   &-0.4361   &-1.3312   &-1.1528   &-0.5945   &-0.4147   &-1.3046   &-1.1290\\
    &$1/2^5$    &-1.2154   &-1.0278   &-2.2383   &-2.0605   &-1.1917   &-1.0048   &-2.2152   &-2.0402\\
    &$1/2^6$    &-1.8149   &-1.6267   &-3.1432   &-2.9656   &-1.7909   &-1.6034   &-3.1217   &-2.9470\\
    &$1/2^7$    &-2.4154   &-2.2272   &-4.0471   &-3.8696   &-2.3912   &-2.2037   &-4.0265   &-3.8519\\
    &$1/2^8$    &-3.0166   &-2.8284   &-4.9506   &-4.7732   &-2.9924   &-2.8049   &-4.9305   &-4.7559\\
    &$1/2^9$    &-3.6182   &-3.4300   &-5.8540   &-5.6765   &-3.5939   &-3.4064   &-5.8340   &-5.6595\\
    &$1/2^{10}$    &-4.2201   &-4.0318   &-6.7573   &-6.5799   &-4.1957   &-4.0083   &-6.7373   &-6.5628\\
    &$1/2^{11}$    &-4.8220   &-4.6338   &-7.6647   &-7.4872   &-4.7977   &-4.6102   &-7.6405   &-7.4660\\
\hline
\end{tabular} 
\end{table}  

Table \ref{table4} presents the numerical results for the four methods FRKN2, FRKN2x, RKN2, and RKN2x when $e=0.01$. 
Again, it follows from Table \ref{table4} that FRKN2x and RKN2x have an accuracy order $p=3$ while the 
FRKN2 and RKN2 have an accuracy order $p=2$. One can see from Table \ref{table4} that the FRKN2 and FRKN2x methods are much better than the RKN2 and RKN2x methods because, as we have discussed earlier, when $e$ is small, the exact solution to the two-body problem can be well approximated by the basis $\{\cos(t),\sin(t)\}$ used to construct FRKN2 and FRKN2x. 

\begin{table}[ht] 
\caption{Errors $\triangle y_i = \log_{10}\displaystyle\max_{t_0\le t_n\le t_N}
|y_i(t_n)-y_{i,\text{comput}}(t_n)|$ over $N$ integration steps
for the two components $i=1,2$ when $e=0.01$.}
\label{table4}
\centering
\small
\renewcommand{\arraystretch}{1.2}
\begin{tabular}{@{}l@{}c|@{\hspace{2mm}}c@{\hspace{2mm}}c@{\hspace{2mm}}|c@{\hspace{2mm}}c@{\hspace{2mm}}|
c@{\hspace{2mm}}c@{\hspace{2mm}}|c@{\hspace{2mm}}c@{\hspace{2mm}}|}
\cline{2-10}
&	&\multicolumn{2}{c|}{FRKN2} & 
\multicolumn{2}{c|}{FRKN2x} &\multicolumn{2}{c|}{RKN2} & 
\multicolumn{2}{c|}{RKN2x}\\
&$h$	&$\triangle y_1$&$\triangle y_2$&$\triangle y_1$&$\triangle y_2$ 
&$\triangle y_1$&$\triangle y_2$&$\triangle y_1$&$\triangle y_2$\\ 
\hline    
    &$1/2^3$    &-2.7401   &-2.6147   &-3.8219   &-3.9469   &-1.7383   &-1.7175   &-1.7393   &-1.7567\\
    &$1/2^4$    &-3.3446   &-3.2180   &-4.7298   &-4.8702   &-2.3078   &-2.2835   &-2.6401   &-2.6591\\
    &$1/2^5$    &-3.9454   &-3.8201   &-5.6354   &-5.7843   &-2.8940   &-2.8680   &-3.5427   &-3.5620\\
    &$1/2^6$    &-4.5469   &-4.4222   &-6.5398   &-6.6932   &-3.4884   &-3.4614   &-4.4457   &-4.4649\\
    &$1/2^7$    &-5.1486   &-5.0242   &-7.4437   &-7.5993   &-4.0866   &-4.0592   &-5.3487   &-5.3679\\
    &$1/2^8$    &-5.7505   &-5.6263   &-8.3477   &-8.5049   &-4.6868   &-4.6592   &-6.2517   &-6.2710\\
    &$1/2^9$    &-6.3525   &-6.2283   &-9.2636   &-9.4296   &-5.2879   &-5.2602   &-7.1548   &-7.1741\\
    &$1/2^{10}$    &-6.9547   &-6.8305  &-10.4191  &-10.8514   &-5.8895   &-5.8617   &-8.0579   &-8.0772\\
\hline
\end{tabular} 
\end{table}  

\section{Conclusion.}
\label{sec:Conclusion}

We studied functionally fitted Runge-Kutta-Nystr\"{o}m  methods using the collocation framework  that we previously introduced for functionally fitted Runge-Kutta methods. This study, therefore, fills a gap
in the literature by unifying the framework for both families of methods. 
We recovered earlier results of Ozawa~\cite{O2}
regarding the order of accuracy and the superconvergence, 
and  established new stability results. 
Numerical experiments showed that FRKN methods tuned
for a particular problem could indeed perform better than general purpose methods.

%\section*{Acknowledgments.}

%The authors are thankful to the anonymous referees for their 
%constructive comments that improved the paper.


\begin{thebibliography}{99}

\bibitem{Coleman}
J. P. Coleman and S.C. Duxbury, 
 Mixed collocation methods for $y''=f(t,y)$,
 {\em J. Comput. Appl. Math.}, 126 (2000), 47--75.

\bibitem{D?Ambrosio}
R. D'Ambrosio, M. Ferro, B. Paternoster, 
Trigonometrically fitted two-step hybrid methods for special second order ordinary differential equations,
 {\em Math. Comput. Simulat.},  81 (2011), no. 5, 1068--1084, 

%\bibitem{Franco1}
%J.M. Franco.
% An embedded pair of exponentially fitted explicit {R}unge-{K}utta
%  methods.
 %{\em J. Comput. Appl. Math.}, 149:407--414, 2002.

\bibitem{Franco2}
J. M. Franco,
 Exponentially fitted explicit {R}unge-{K}utta-{N}ystr\"{o}m methods, 
 {\em J. Comput. Appl. Math.}, 167 (2004), 1--19.

\bibitem{Franco&Gomez2013}
J. M. Franco and I. G{\'o}mez, 
Some procedures for the construction of high-order exponentially fitted 
{R}unge-{K}utta-{N}ystr\"{o}m methods of explicit type,
 {\em Comput. Phys. Commun.}, 184 (2013), 1310--1321.


\bibitem{FRK}
N. S. Hoang, R. B. Sidje, N. H. Cong,
On functionally-fitted {R}unge-{K}utta methods, {\em BIT Numer. Math.}, 46 (2006), no. 4, 861--874. 

\bibitem{SFRK}
N. S. Hoang, R. B. Sidje, On the stability of functionally fitted Runge-Kutta methods, {\em BIT Numer. Math.}, 48 (2008), no. 1, 61--77.


\bibitem{O2}
K.~Ozawa, 
 Functional fitting {R}unge-{K}utta-{N}ystr\"{o}m method with variable
  coefficients,
 {\em Japan J. Indust. App. Math.}, 19 (2002), 55--85.


\bibitem{Pater}
B.~Paternoster, 
 {R}unge-{K}utta(-{N}ystr\"{o}m) methods for {ODEs} with periodic
  solutions based on trigonometric polynomials,
 {\em Appl. Num. Math.}, 28 (1998), 401--412.

\bibitem{cong91}
P. J. van der Houwen, B. P. Sommeijer, N. H. Cong, Stability of collocation-based Runge-Kutta-Nystr\"{o}m methods, {\em BIT Numer. Math.}, 31 (1991), no. 3, 469--481.



\end{thebibliography}
\end{document}